\def\bin{\textsc{Bin}}
\def\eps{\varepsilon}
\def\argmin{\mathop{\rm arg\, min}}
\def\real{{\mathbb{R}}}
\def\R{{\real}}
\def\E{{\mathbb E}}
\def\simplex{\Lambda_M}
\def\mydefb#1{\expandafter\def\csname b#1\endcsname{{\boldsymbol{#1}}}}
\def\mydefallb#1{\ifx#1\mydefallb\else\mydefb#1\expandafter\mydefallb\fi}
\DeclareMathOperator{\trace}{trace}
\title[Optimally tuning Tikhonov regularizers]{The cost-free nature of optimally tuning Tikhonov regularizers
and other ordered smoothers}
\author[P.C. Bellec]{Pierre C. Bellec$^*$}
\thanks{$^*$: 
Department of Statistics, Busch Campus,
Rutgers University, Piscataway, NJ 08854, USA.
}
\thanks{$^*$:
Research partially supported by the NSF Grant DMS-1811976.
}
\author[D. Yang]{Dana Yang$^\dagger$}
\thanks{$^\dagger$: 
Department of Statistics \& Data Science, 
Yale University, New Haven, CT 06511, USA.
}
\declaretheorem[name=Theorem,numberwithin=section]{theorem}
\declaretheorem[name=Lemma,sibling=theorem]{lemma}
\declaretheorem[name=Corollary,sibling=theorem]{corollary}
\declaretheorem[name=Definition,style=definition]{definition}
\def\df{{\mathop{{\rm df}}}{}}
\numberwithin{equation}{section}
\begin{document}

\maketitle

\begin{abstract}
  We consider the problem of selecting the best estimator among
  a family of Tikhonov regularized estimators, or, alternatively, to select
  a linear combination of these regularizers that is as good as the
  best regularizer in the family.
  Our theory reveals that if the Tikhonov regularizers share the same penalty matrix
  with different tuning parameters,
  a convex procedure based on $Q$-aggregation achieves the mean square
  error of the best estimator, up to a small error term no larger than $C\sigma^2$, where $\sigma^2$ is the noise level and $C>0$ is an absolute constant.
  Remarkably, the error term does not depend on the penalty matrix
  or the number of estimators
  as long as they share the same penalty matrix, i.e., it applies to any grid of tuning parameters,
  no matter how large the cardinality of the grid is.
  This reveals the surprising "cost-free" nature of optimally tuning Tikhonov regularizers,
  in striking contrast with the existing literature on aggregation
  of estimators where one typically has to pay a cost of $\sigma^2\log(M)$ where
  $M$ is the number of estimators in the family.
  The result holds, more generally, for any family of ordered linear smoothers.
  This encompasses Ridge regression as well as Principal Component Regression.
  The result is extended to the problem of tuning Tikhonov regularizers
  with different penalty matrices.
\end{abstract}

\section{Introduction}

Consider a learning problem where one is given an observation vector $y\in\R^n$ and
a design matrix $X\in\R^{n\times p}$. Given a positive definite matrix $K\in\R^{p\times p}$
and a regularization parameter $\lambda > 0$, the Tikhonov regularized estimator
$\hat w(K,\lambda)$ is defined as the solution of the quadratic program
\begin{equation}
    \textstyle
    \hat w(K,\lambda) = \argmin_{w \in \R^p}
    \left(
    \|X w - y\|^2 + \lambda w^T K w
    \right),
\end{equation}
where $\|\cdot\|$ is the Euclidean norm.
Since we assume that the penalty matrix
$K$ is positive definite, the above optimization problem
is strongly convex and the solution is unique.
In the special case $K=I_{p\times p}$, the above estimator reduces to Ridge regression.
It is well known that the above optimization problem can be explicitly solved
and that
\begin{align*}
    \hat w(K, \lambda) 
    &=(X^TX + \lambda K)^{-1}X^T y \\
    &=K^{-1/2} (K^{-1/2} X^T X K^{-1/2} + \lambda I_{p\times p} )^{-1} K^{-1/2} X^T
    .
\end{align*}

\subsection*{Problem statement.}
Consider the Gaussian mean model
\begin{equation}
    y=\mu+\eps \qquad \text{ with }\qquad \eps\sim N(0,\sigma^2 I_{n\times n})
    \label{model}
\end{equation}
where $\mu\in\R^n$ is an unknown mean, and consider a deterministic design matrix $X\in\R^{n\times p}$. 
We are given a grid of tuning parameters $\lambda_1,...,\lambda_M\ge 0$ and a penalty matrix $K$
as above. Our goal is to construct an estimator $\tilde w$ such that the \emph{regret}
or \emph{excess risk}
\begin{equation}
    \E[ \|X \tilde w - \mu\|^2 ] - \min_{j=1,...,M}\E[\|X\hat w (K,\lambda_j)-\mu\|^2]
    \label{regret-tikhonov}
\end{equation}
is small.
Beyond the construction of an estimator $\tilde w$ that has small regret,
we aim to answer the following questions:
\begin{itemize}
    \item How does the worst-case regret scales with $M$, the number of tuning
        parameters on the grid?
    \item How does the worst case regret scales with $R^*=\min_{j=1,...,M} \E[ \|X\hat w
        (K,\lambda_j)-\mu\|^2]$, the minimal mean squared error among the tuning parameters $\lambda_1,...,\lambda_M$?
\end{itemize}

\subsection*{Ordered linear smoothers.} If $A_j=X(X^TX + \lambda_j K)X^T$ is the matrix such that
$A_j y = X \hat w(K,\lambda_j)$, the family of estimators $\{A_j, j=1,...,M\}$ is an example
of ordered linear smoothers, introduced \cite{kneip1994ordered}.

\begin{definition}
    \label{def:ordered-linear-smoothers}
The family of $n\times n$ matrices $\{A_1,...,A_M\}$ are referred to
as ordered linear smoothers if (i) $A_j$ is symmetric and $0\le w^T A_j w \le \|w\|^2$ for all $w\in\R^p$ and all $j=1,...,M$, (ii) the matrices commute: $A_jA_k = A_kA_j$ for all $j,k=1,...,M$, 
and (iii) either $A_j \preceq A_k$ or $A_k \preceq A_j$ holds for all $j,k=1,...,M$,
where $\preceq$ denotes the partial order of positive symmetric matrices, i.e.,
$A\preceq B$ if and only if $B-A$ is positive semi-definite.
\end{definition}

Condition (i) is mild: if the matrix $A$ is not symmetric then it is not admissible
and there exists a symmetric matrix $A'$ such that $\E[\|A'y - \mu\|^2] \le \E[\|Ay - \mu\|^2]$ with a strict inequality for at least one $\mu\in\R^n$ \cite{cohen1966all}, so we may as well replace $A$ with the symmetric matrix $A'$.
Similarly, if $A$ is symmetric
with some eigenvalues outside of $[0,1]$, then $A$ is not admissible and
there exists another symmetric matrix $A'$ with eigenvalues in $[0,1]$ and
smaller prediction error for all $\mu\in\R^n$, and strictly smaller prediction error for at least one $\mu\in\R^n$ if $n\ge 3$
\cite{cohen1966all}.

Conditions (ii) and (iii) are more stringent: they require that the matrices can be
diagonalized in the same orthogonal basis $(u_1,...,u_k)$ of $\R^n$, and that the matrices
are ordered in the sense that there exists $n$ functions $\alpha_1,...,\alpha_n:\R\to[0,1]$, either all non-increasing or all non-decreasing, such that
\begin{equation}
\{A_1,...,A_M\} \subset \{\alpha_1(\lambda) u_1 u_1^T + ... + \alpha_n(\lambda) u_n u_n^T, \lambda\in\R\},
\label{eq:linear-smoothers}
\end{equation}
see \cite{kneip1994ordered} for a rigorous proof of this fact.
A special case of particular interest is the above Tikhonov regularized estimators,
which satisfies conditions (i)-(ii)-(iii). In this case, the matrix
$A_j= X (X^TX + \lambda_j K)^{-1} X^T$ is such that $A_j y = X \hat w(K,\lambda_j)$.
To see that for any grid of tuning parameters $\lambda_1,...,\lambda_M$,
the Tikhonov regularizers form a family of ordered linear smoothers,
the matrix $A_j$ can be rewritten as $A_j = B(B^TB + \lambda_j I_{p\times p})^{-1} B^T$
where $B$ is the matrix $X K^{-1/2}$. From this expression of $A_j$, it is clear
that $A_j$ is symmetric, that $A_j$ can be diagonalized in the orthogonal basis
made of the left singular vectors of $B$, and that the eigenvalues of $A_j$ are decreasing 
functions of the tuning parameter. Namely, the $i$-th eigenvalue of $A_j$ is equal
to $\alpha_i(\lambda_j) = \mu_i(B)^2/(\mu_i(B)^2 + \lambda_j)$ where $\mu_i(B)$ is the $i$-th singular value of $B$.

\subsection*{Overview of the literature.}
There is a substantial amount of literature related to this problem, starting
with \cite{kneip1994ordered} where ordered linear smoothers are introduced
and where their properties were first studied.
\citet{kneip1994ordered} proves that if $A_1,...,A_M$ are ordered linear smoothers,
then selecting the estimate with the smallest $C_p$ criterion \cite{mallows1973some}, i.e.,
\begin{equation}
    \textstyle
    \hat k = \argmin_{j=1,...,M} C_p(A_j), \quad\text{where}\quad C_p(A) = \|A y - y\|^2 + 2 \sigma^2 \trace(A_j),
\end{equation}
leads to the regret bound (sometimes referred to as \emph{oracle inequality})
\begin{equation}
    \E[\|A_{\hat k} y - \mu\|^2] - R^*
    \le C \sigma \sqrt{R^*} + C\sigma^2,
    \quad\text{ where }\quad
    R^* = \min_{j=1,...,M} \E[\|A_j y - \mu\|^2]
    \label{oracle-inequality-kneip}
\end{equation}
for some absolute constant $C>0$.
This result was later improved
in
\cite[Theorem 3]{golubev2010universal}
\cite{chernousova2013ordered}
using an estimate based on exponential weighting,
showing that the regret is bounded from above by $\sigma^2\log(2+R^*/\sigma^2)$.

Another line of research has obtained regret bounds that scales with the cardinality
$M$ of the given family of linear estimators.
Using an exponential weight estimate with a well chosen temperature parameter,
\cite{leung2006information,dalalyan2012sharp} showed that if $A_1,...,A_M$ are squared matrices of size $n$ that are either orthogonal projections, or that satisfies some commutativity property, then a data-driven convex combination $\hat A_{EW}$ of the matrices
$A_1,...,A_M$ satisfies
\begin{equation}
    \E[\|\hat A_{EW} y - \mu\|^2] - R^*
    \le C \sigma^2 \log M.
    \label{oracle-inequality-EW}
\end{equation}
where $C>0$ is an absolute constant.
This was later improved in \cite{bellec2014affine} using an estimate from
the $Q$-aggregation procedure of \cite{dai2012deviation,dai2014aggregation}. Namely, Theorem 2.1 in \cite{bellec2014affine} states that if $A_1,...,A_M$ are squared matrices with operator norm at most 1,
then
\begin{equation}
    \mathbb P\Big(
    \|\hat A_Q y - \mu\|^2] - \min_{j=1,...,M}\|A_j y - \mu\|^2
    \le C \sigma^2 \log(M/\delta)
    \Big) \ge 1-\delta
    \label{oracle-inequality-Q}
\end{equation}
for any $\delta\in(0,1)$, where $\hat A_Q$ is a data-driven convex combination
of the matrices $A_1,...,A_M$. A result similar to
\eqref{oracle-inequality-EW} can then be deduced from the above high probability
bound by integration.  It should be noted that the linear estimators in
\eqref{oracle-inequality-EW} and \eqref{oracle-inequality-Q} need not be
ordered smoothers (the only assumption in in \eqref{oracle-inequality-Q} is
that the operator norm of $A_j$ is at most one), unlike
\eqref{oracle-inequality-kneip} where the ordered smoothers assumption is key.

Another popular approach to select a good estimate among a family of
linear estimators is the Generalized Cross-Validation (GCV) criterion of
\cite{craven1978smoothing,golub1979generalized}.
If we are given $M$ linear estimators defined by square matrices $A_1,...,A_M$,
Generalized Cross-Validation selects the estimator
$$\hat k = \argmin_{j=1,...,M}\left(\|A_j y - y\|^2 /(\trace[I_{n\times n}-A_j] )^2 \right).$$
We could not pinpoint in the literature an oracle inequality satisfied by GCV comparable to \eqref{oracle-inequality-kneip}-\eqref{oracle-inequality-EW}-\eqref{oracle-inequality-Q}, though we mention that
\cite{li1986asymptotic} exhibits asymptotic frameworks where GCV is 
suboptimal while, in the same asymptotic frameworks, Mallows $C_p$ is optimal.

The problem of optimally tuning Tikhonov regularizers, Ridge regressors
or smoothning splines has received considerable attention in the last four decades
(for instance, the GCV paper \cite{golub1979generalized} is cited more than four thousand times) and the authors of the present paper are guilty of numerous omissions
of important related works.
We refer the reader to the recent surveys \cite{arlot2010survey,arlot2009data}
and the references therein for the problem of tuning linear
estimators, and to \cite{tsybakovICM} for a survey of aggregation results.

Coming back to our initial problem of optimally tuning a family of Tikhonov
regularizers $\hat w(K,\lambda_1),...,\hat w(K,\lambda_M)$,
the results \eqref{oracle-inequality-kneip}, \eqref{oracle-inequality-EW}
and \eqref{oracle-inequality-Q} above suggest that one must pay a price
that depends either on the cardinality $M$ of the grid of tuning parameters,
or on $R^*=\min_{j=1,...,M}\E[\|X\hat w(K,\lambda_j) - \mu\|^2]$,
the minimal mean squared error on this grid.

\subsection*{
%Punchline:
Optimally tuning ordered linear smoothers incurs no statistical cost.} Surprisingly, our theoretical results of the next
sections reveal that if $A_1,...,A_M$ are ordered linear smoothers,
for example Tikhonov regularizers sharing the same penalty matrix $K$,
then it is possible to construct a data-driven convex combination $\hat A$ of $A_1,...,A_M$ such
that the regret satisfies
$$\E[ \|\hat A y - \mu\|^2] - \min_{j=1,...,M}\E[\|A_j y - \mu\|^2]
\le \Cl{punchline}\sigma^2$$
for some absolute constant $\Cr{punchline}>0$.
Hence the regret in \eqref{regret-tikhonov} is bounded by $\Cr{punchline}\sigma^2$,
an upper bound that is (a) independent of the cardinality $M$ of the grid of tuning parameters
and (b) independent of the minimal risk $R^*=\min_{j=1,...,M}\E[\|A_j y - \mu\|^2]$.
No matter how coarse the grid of tuning parameter is, no matter the number of tuning parameters to choose from, no matter how large the minimal risk $R^*$ is, the regret
of the procedure constructed in the next section is always bounded by $\Cr{punchline}\sigma^2$.

\paragraph{Notation.} Throughout the paper, $C_1,C_2,C_3...$ denote absolute positive constants. The norm $\|\cdot\|$ is the Euclidean norm of vectors. Let $\|\cdot\|_{op}$ and
$\|\cdot\|_F$ be the operator and Frobenius norm of matrices.

\section{Construction of the estimator}
\label{sec:construction-estimator}

Assume that we are given $M$ matrices $A_1,...,A_M$, each matrix corresponding
to the linear estimator $A_j y$. \citet{mallows1973some} $C_p$ criterion is given by
\begin{equation}
    \label{C_p}
    C_p(A) \triangleq \|A y - y\|^2  + 2\sigma^2 \trace A
\end{equation}
for any square matrix $A$ of size $n\times n$. Following several works on aggregation
of estimators \cite{nemirovski2000lecture,tsybakov2003optimal,leung2006information,rigollet2007linear,dalalyan2012sharp,dai2012deviation,bellec2014affine}
we parametrize the convex hull of the matrices $A_1,...,A_M$ as follows:
\begin{equation}
    \label{notation-A_theta}
    A_\theta \triangleq \sum_{j=1}^M \theta_j A_j, \quad
    \text{for each }\theta \in \simplex,
    \quad\text{ where }\quad \Lambda_M 
    = \Big\{ \theta\in\R^M : \theta_j\ge 0, \sum_{j=1}^M \theta_j = 1 \Big\}.
\end{equation}
Above, $\Lambda_M$ is the simplex in $\R^M$ and the convex hull of the matrices
$A_1,...,A_M$ is exactly the set $\{A_\theta, \theta\in \simplex \}$.
Finally, define the weights $\hat\theta\in\simplex$ by
\begin{equation}
    \label{optimization-problem}
    \hat\theta = \argmin_{\theta\in\simplex}
    \Big(
        C_p(A_\theta) + \frac 1 2 \sum_{j=1}^M \theta_j \|(A_\theta- A_j)y\|^2
    \Big).
\end{equation}
The first term of the objective function is Mallows $C_p$ from \eqref{C_p},
while the second term is a penalty derived from the $Q$-aggregation
procedure from \cite{rigollet2012kullback,dai2012deviation}.
The penalty is minimized at the vertices of the simplex and thus penalizes the interior of $\Lambda_M$.
Although convexity of the above optimization problem is unclear at first sight
because the penalty is non-convex,
the objective function can be rewritten, thanks to a bias-variance decomposition, as
\begin{equation}
\textstyle
    \frac 1 2 \|A_\theta y - y\|^2 + 2 \sigma^2\trace(A_\theta)+ \frac 1 2 \sum_{j=1}^M \theta_j \|A_jy - y\|^2.
    \label{eq:objective-function-clearly-convex}
\end{equation}
The first term is a convex quadratic form in $\theta$, while both the second
term $(2\sigma^2 \trace[A_\theta])$ and the last term are linear in $\theta$. 
It is now clear that the objective function is convex and \eqref{optimization-problem}
is a convex quadratic program (QP) with $M$ variables and $M+1$ linear constraints.
The computational complexity of such convex QP is polynomial and well studied,
 e.g., \cite[page 304]{Vavasis2001}.
The final estimator is 
\begin{equation}
    \textstyle
    \hat y \triangleq A_{\hat \theta} y = \sum_{j=1}^M \hat\theta_j A_j y,
\end{equation}
a weighted sum of the values predicted by the linear estimators $A_1,...,A_j$.
The performance of this procedure is studied in \cite{dai2014aggregation,bellec2014affine};
\cite{bellec2014affine} derived the oracle inequality \eqref{oracle-inequality-Q} which
is optimal for certain collections $\{A_1,...,A_m\}$.
However, we are not aware of previous analysis of this procedure in the context
of ordered linear smoothers.

\section{Constant regret for ordered linear smoothers}

\begin{theorem}
    \label{thm:main}
    The following holds for absolute constants
    $\Cr{punchline},\Cr{lem56},\Cr{series-probability}>0$.
    Consider the Gaussian mean model \eqref{model}.
    Let $\{A_1,...,A_M\}$ be a family ordered linear smoothers as in \Cref{def:ordered-linear-smoothers}.
    Let $\hat\theta$ be the solution to the optimization problem \eqref{optimization-problem}. Then
    $\hat y = A_{\hat \theta}y$ enjoys the regret bound
    \begin{equation}
        \E[ \|A_{\hat\theta}y - \mu\|^2] - \min_{j=1,...,M}\E[\|A_j y - \mu\|^2] \le \Cr{punchline}\sigma^2
        .
        \label{regret-bound-main-theorem-expectation}
    \end{equation}
    Furthermore, if $j_* =\argmin_{j=1,...,M}\E[\|A_j y - \mu\|^2]$ has minimal risk
    then for any $x\ge 1$,
    \begin{equation}
        \label{eq:main-thm:probability}
        \mathbb P\left\{
            \|A_{\hat\theta}y - \mu\|^2 - \|A_{j_*} y - \mu\|^2 \le \Cl{lem56} \sigma^2x
        \right\} \ge 1-\Cl{series-probability} e^{-x}.
    \end{equation}
\end{theorem}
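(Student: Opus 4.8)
The plan is to pass to scalar coordinates, extract a single self-bounding inequality, and then control its stochastic part using only the ordering property (iii). Since the $A_j$ are symmetric and commute (conditions (i)--(ii) of \Cref{def:ordered-linear-smoothers}), I would first fix a common orthonormal eigenbasis $u_1,\dots,u_n$ and set $y_i=u_i^T y$, $\mu_i=u_i^T\mu$, $\eps_i=u_i^T\eps$, so that $\eps_1,\dots,\eps_n$ are i.i.d.\ $N(0,\sigma^2)$ and each $A_\theta$ acts coordinatewise through its eigenvalues $a_i(\theta)=\sum_{j}\theta_j a_{ji}\in[0,1]$, where $a_{ji}$ is the $i$-th eigenvalue of $A_j$. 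Every prediction error then becomes a sum of $n$ scalar bias--variance terms.

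Next I would use that the objective of \eqref{optimization-problem}, in the convex form \eqref{eq:objective-function-clearly-convex}, is strongly convex in the prediction vector $A_\theta y$: its quadratic part $\tfrac12\|A_\theta y-y\|^2$ has, as a function of $\theta$, the Gram matrix of $(A_1y,\dots,A_My)$ as Hessian. Writing $H$ for this objective, the first-order optimality of $\hat\theta$ over $\Lambda_M$ gives, for every $\theta\in\Lambda_M$,
\[
  H(\hat\theta)+\tfrac12\|A_\theta y-A_{\hat\theta}y\|^2\le H(\theta).
\]
The decisive choice is $\theta=e_{j_*}$: the $Q$-aggregation penalty $P(\theta)=\tfrac12\sum_j\theta_j\|(A_\theta-A_j)y\|^2$ vanishes at a vertex, so that term disappears on the right. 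Writing $D=A_{\hat\theta}-A_{j_*}$ and expanding $C_p$ through the exact SURE identity $C_p(A_\theta)-\|\eps\|^2=\|A_\theta y-\mu\|^2-2\langle (A_\theta-I)\mu,\eps\rangle-2(\eps^T A_\theta\eps-\sigma^2\trace A_\theta)$, the linear and quadratic noise contributions combine, via $2\langle D\mu,\eps\rangle+2\eps^T D\eps=2\langle Dy,\eps\rangle$, into the single pathwise bound
\[
  \|A_{\hat\theta}y-\mu\|^2-\|A_{j_*}y-\mu\|^2\le 2\langle Dy,\eps\rangle-2\sigma^2\trace D-\tfrac12\|Dy\|^2-P(\hat\theta).
\]
It remains to bound the right-hand side by $C\sigma^2$ with the stated probability.

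The heart of the argument, and where property (iii) is indispensable, is the control of $2\langle Dy,\eps\rangle-2\sigma^2\trace D$. A coordinatewise estimate here would cost $\sum_i O(\sigma^2)=n\sigma^2$, while a union bound over the $M$ smoothers would cost $\sigma^2\log M$; both must be avoided. By (iii) every $A_j$ is comparable to $A_{j_*}$, so each difference $A_j-A_{j_*}$ is either positive or negative semidefinite; equivalently, the eigenvalue vectors $\delta_j=(a_{ji}-a_{j_*i})_i$ are sign-definite. Splitting $\hat\theta$ according to whether $A_j\succeq A_{j_*}$ or $A_j\preceq A_{j_*}$ then expresses $D$ as a difference of two convex combinations, each taken along a monotone \emph{chain} of semidefinite matrices, so that the admissible differences $Dy$ trace out what is essentially a one-parameter monotone path. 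This is precisely the mechanism that collapses the effective complexity of the family from $M$ (or $n$) down to an absolute constant.

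Finally I would bound the supremum of $2\langle(A_\theta-A_{j_*})y,\eps\rangle-2\sigma^2\trace(A_\theta-A_{j_*})-\tfrac12\|(A_\theta-A_{j_*})y\|^2$ over this monotone family. The term linear in $\eps$ is a Gaussian process handled by Gaussian concentration, while $\eps^T D\eps-\sigma^2\trace D$ is a Gaussian chaos handled by the Hanson--Wright inequality; the negative self-bounding term $-\tfrac12\|Dy\|^2$ absorbs the fluctuations, and a peeling argument along the chain yields a bound of the form $C\sigma^2 x$ with probability $1-C'e^{-x}$, which is \eqref{eq:main-thm:probability} and integrates to \eqref{regret-bound-main-theorem-expectation}. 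I expect the main obstacle to be exactly this uniform control: proving a maximal inequality over the ordered family whose constant is independent of $M$ and $n$. The subtlety is that $\hat\theta$ typically places mass on both sides of $j_*$, so the two semidefinite parts of $D$ are coupled through the single quadratic term $\|Dy\|^2$ and the chaos must be controlled simultaneously in both directions; resolving this cleanly should rely on the monotone chain structure above rather than on any cardinality- or dimension-based concentration.
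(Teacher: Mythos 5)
Your deterministic reduction is correct, and it is in fact the same inequality as the paper's \Cref{lemma:deterministic}: by the bias--variance identity $\sum_j\theta_j\|(A_j-\bar A)y\|^2=\|(A_\theta-\bar A)y\|^2+\sum_j\theta_j\|(A_j-A_\theta)y\|^2$, your right-hand side $2\langle Dy,\eps\rangle-2\sigma^2\trace D-\tfrac12\|Dy\|^2-P(\hat\theta)$ equals the weighted average $\sum_j\hat\theta_j\bigl(2\eps^T(A_j-A_{j_*})y-2\sigma^2\trace(A_j-A_{j_*})-\tfrac12\|(A_j-A_{j_*})y\|^2\bigr)$, which the paper then bounds by the maximum over $j$. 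That last step matters: it reduces the problem to a supremum over the \emph{finite ordered family}, whereas you elect to keep $D=A_{\hat\theta}-A_{j_*}$ and must therefore control a process indexed by differences of convex combinations (the data-dependence of $\hat\theta$ forces a bound uniform over the whole simplex). That is strictly harder: the convex hull of a chain is not a chain (if $A\preceq B\preceq C$, then $\tfrac12(A+C)$ need not be comparable to $B$), and since your process is not linear in $D$ (it contains the chaos term and the negative quadratic), its supremum over the hull is not attained at the vertices, so the ``two monotone chains'' picture does not by itself describe the index set you have to control. The average-$\le$-max reduction, available to you for free from the identity above, removes this complication entirely.

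The genuine gap, however, is the one you flag yourself: the maximal inequality over the ordered family with a constant independent of $M$ and $n$ \emph{is} the theorem, and your proposal asserts it rather than proves it. Gaussian concentration and Hanson--Wright are per-matrix (or per-pair) tools; to pass to a supremum without paying $\log M$ one needs a chaining argument, and the quantitative form of ``the chain collapses the complexity'' is a metric entropy estimate that never appears in your sketch. In the paper this is \Cref{lemma:bound-on-gamma-functionals}: for any metric of the form $d(A,B)^2=a\|A-B\|_F^2+\|(A-B)\mu\|^2$, the ordering yields $d(B_\lambda,B_\nu)^2\le d(B_{\lambda_0},B_\nu)^2-d(B_{\lambda_0},B_\lambda)^2$ for $\lambda_0\le\lambda\le\nu$, so binning the family by the value of $d(\cdot,B_{\lambda_0})^2$ produces a covering at scale $\Delta/N$ with only $N$ points; hence $\gamma_1(F,d)+\gamma_2(F,d)\le C\,\Delta(F,d)$, i.e.\ the family has the entropy of an interval, with no dependence on $M$ or $n$. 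This estimate is then fed into Talagrand's majorizing measure theorem for the linear part (\Cref{lemma:gaussian-process}) and into Dirksen's chaining bound for suprema of chaos processes, with Hanson--Wright providing the increment bounds (\Cref{lemma:quadratic-process}); finally the slicing over scales $\delta_k=2^k\sigma$ (\Cref{lemma:final-slicing}) uses $-\tfrac12 d(B,\bar A)^2$ to absorb the diameter-proportional terms and produces the $C\sigma^2x$ bound. Your ``one-parameter monotone path'' intuition is exactly the right one, but without this entropy bound (or an equivalent time-change/martingale comparison making the Gaussian part comparable to a Brownian motion run for time $\Delta^2$), neither the Gaussian supremum nor the chaos supremum can be bounded by a multiple of the slice diameter, and the peeling argument has nothing to peel; note also that the entropy bound must be proved for the mixed metric \eqref{def-metric}, not just the Frobenius metric, since both parts of the process must be chained before slicing.
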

Let us explain the ``cost-free'' nature of the above result. In the simplest, one-dimensional regression problem
where the design matrix $X$ has only one column and $\mu=X\beta^*$ for some unknown scalar $\beta^*$,
the prediction error of the Ordinary Least Squares estimator
is $\E[\|X(\hat\beta^{ols} - \beta^*)\|^2] = \sigma^2$ because the random variable 
$\|X(\hat\beta^{ols} -\beta^*)\|^2/\sigma^2$ has chi-square distribution with
one degree-of-freedom.
Hence the right hand side of the regret bound in \eqref{regret-bound-main-theorem-expectation} is no larger than
a constant times the prediction error in a one-dimensional linear model.
The right hand side of \eqref{regret-bound-main-theorem-expectation} is independent of the
minimal risk $R^*$,
independent of the cardinality $M$ of the family of estimators, and if the estimators were constructed
from a linear model with $p$ covariates, the right hand side of \eqref{regret-bound-main-theorem-expectation}
is also independent of the dimension $p$.

Since the most commonly ordered linear smoothers are Tikhonov regularizers (which encompass  Ridge regression and smoothing splines),
we provide the following corollary for convenience.
\begin{corollary}[Application to Tikhonov regularizers]
    Let $K$ be a positive definite matrix of size $p\times p$ and let $\lambda_1,...,\lambda_M\ge 0$
    be distinct tuning parameters. Define $\hat\theta$ as the minimizer of
    \begin{equation}
    \hat\theta = \argmin_{\theta\in\simplex}
    \Big(
    \frac 1 2 \|\sum_{j=1}^M \theta_j X \hat w(K,\lambda_j) - y \|^2
    + 2 \sigma^2 \sum_{j=1}^M\theta_j \df_j
    + \frac 1 2 \sum_{j=1}^M \theta_j
    \|X \hat w(K,\lambda_j) - y\|^2
    \Big),
    \label{objective-function-tikhnonv-convenience}
    \end{equation}
    where $\df_j = \trace[X^T(X^TX + \lambda_j K)^{-1}X^T]$.
    Then the weight vector $\tilde w = \sum_{j=1}^M \hat\theta_j \hat w(K,\lambda_j)$ in $\R^p$ is such that
    the regret \eqref{regret-tikhonov} is bounded from above by $\Cr{punchline}\sigma^2$
    for some absolute constant $\Cr{punchline}>0$.
\end{corollary}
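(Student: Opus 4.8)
The plan is to derive this corollary as an immediate specialization of \Cref{thm:main}, so the entire argument is a reduction rather than a fresh analysis. First I would introduce the matrices $A_j = X(X^TX+\lambda_j K)^{-1}X^T$ for $j=1,\dots,M$. Using the explicit formula $\hat w(K,\lambda_j)=(X^TX+\lambda_j K)^{-1}X^Ty$ recalled in the introduction, one has $A_j y = X\hat w(K,\lambda_j)$ and, by the cyclic invariance of the trace, $\trace(A_j)=\df_j$. As already explained in the introduction through the factorization $A_j=B(B^TB+\lambda_j I_{p\times p})^{-1}B^T$ with $B=XK^{-1/2}$, these matrices are symmetric, commute (being simultaneously diagonalized by the left singular vectors of $B$), and are totally ordered with eigenvalues $\mu_i(B)^2/(\mu_i(B)^2+\lambda_j)\in[0,1]$; hence $\{A_1,\dots,A_M\}$ satisfies \Cref{def:ordered-linear-smoothers} for \emph{any} positive definite $K$ and any grid, so the hypotheses of \Cref{thm:main} hold.

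Second, I would verify that the objective in \eqref{objective-function-tikhnonv-convenience} is exactly the objective \eqref{eq:objective-function-clearly-convex} used to define $\hat\theta$ in \Cref{thm:main}. Substituting $X\hat w(K,\lambda_j)=A_jy$ turns the first and third terms into $\tfrac12\|A_\theta y-y\|^2$ and $\tfrac12\sum_{j=1}^M\theta_j\|A_jy-y\|^2$, while the middle term becomes $2\sigma^2\sum_{j=1}^M\theta_j\trace(A_j)=2\sigma^2\trace(A_\theta)$ by linearity of the trace. Since \eqref{eq:objective-function-clearly-convex} is, by construction, the convex reformulation of the $Q$-aggregation criterion \eqref{optimization-problem}, the weight vector $\hat\theta$ defined in the corollary coincides with the one produced by \Cref{thm:main}.

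Third, I would translate the conclusion back. The aggregated fit is $A_{\hat\theta}y=\sum_{j=1}^M\hat\theta_jA_jy=\sum_{j=1}^M\hat\theta_j X\hat w(K,\lambda_j)=X\tilde w$, so the Tikhonov regret \eqref{regret-tikhonov} is precisely $\E[\|A_{\hat\theta}y-\mu\|^2]-\min_{j}\E[\|A_jy-\mu\|^2]$, which \eqref{regret-bound-main-theorem-expectation} bounds by $\Cr{punchline}\sigma^2$. I do not expect any genuine obstacle: the only points needing care are the ordered-smoother verification for a general penalty matrix $K$ (dispatched by the $B=XK^{-1/2}$ reduction) and the identification $\df_j=\trace(A_j)$, both of which are bookkeeping. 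The substance of the result lives entirely in \Cref{thm:main}.
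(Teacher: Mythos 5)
Your proposal is correct and is essentially the paper's own proof: the corollary is obtained as a direct specialization of \Cref{thm:main} with $A_j = X(X^TX+\lambda_j K)^{-1}X^T$, with the ordered-smoother property dispatched by the $B = XK^{-1/2}$ factorization from the introduction and the objective \eqref{objective-function-tikhnonv-convenience} identified with \eqref{eq:objective-function-clearly-convex}. Your version is in fact slightly more careful than the paper's one-paragraph argument, since you spell out the identification $\df_j=\trace(A_j)$ and silently correct the paper's typo $A_j = X^T(X^TX+\lambda_j K)^{-1}X^T$ (the leading factor should be $X$, as you write).
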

This corollary is a direct consequence of \Cref{thm:main} with $A_j = X^T(X^TX
+ \lambda_j K)^{-1} X^T$.  The fact that this forms a family of ordered linear
smoothers is explained after \eqref{eq:linear-smoothers}.  The objective
function \eqref{objective-function-tikhnonv-convenience} corresponds to
the formulation \eqref{eq:objective-function-clearly-convex} of the objective
function in \eqref{optimization-problem}; we have chosen this formulation
so that \eqref{objective-function-tikhnonv-convenience} can be easily implemented
as a convex quadratic program with linear constraints, the first term of the objective
function being quadratic in $\theta$ while the second and third terms are
linear in $\theta$.

The procedure above requires knowledge of $\sigma^2$, which needs to be estimated beforehand in practice.
Estimators of $\sigma^2$ are available depending on the underlying context,
e.g., difference based estimates for observations on a grid \cite{dette1998estimating,hall1990asymptotically,munk2005difference,brown2007variance},
or pivotal estimators of $\sigma$ in sparse linear regression, e.g.,
\cite{belloni2014pivotal,sun2012scaled,owen2007robust}.
Finally \cite[Section 7.5]{friedman2001elements} recommends estimating $\sigma^2$
by the squared residuals on a low-bias model. We also note that procedure \eqref{optimization-problem} is robust to misspecified $\sigma$ if each $A_j$ is an orthogonal projection
\cite[Section 6.2]{bellec2014affine}.

\section{Multiple families of ordered smoothers or Tikhonov penalty matrices}

\begin{theorem}
\label{thm:multi.family}
    The following holds for absolute constants
    $\Cr{punchline},\Cr{lem56},\Cr{series-probability}>0$.
    Consider the Gaussian mean model \eqref{model}.
    Let $\{A_1,...,A_M\}$ be a set of linear estimators such that
    $$ \{A_1,...,A_M\} \subset F_1\cup ... \cup F_q,
    $$
    where $F_k$ is a family of ordered linear smoothers as in \Cref{def:ordered-linear-smoothers} for each $k=1,...,q$.
    Let $\hat\theta$ be the solution to the optimization problem \eqref{optimization-problem}. Then
    $\hat y = A_{\hat \theta}y$ enjoys the regret bound
    \begin{equation}
    \label{multiple-families-mean}
        \E[ \|A_{\hat\theta}y - \mu\|^2] - \min_{j=1,...,M}\E[\|A_j y - \mu\|^2] \le \Cr{punchline}\sigma^2 + \Cr{lem56} \sigma^2 \log q
        .
    \end{equation}
    Furthermore, if $j_* =\argmin_{j=1,...,M}\E[\|A_j y - \mu\|^2]$ has minimal risk 
    then for any $x\ge 1$,
    \begin{equation}
        \label{multiple-famiilies-probability}
        \mathbb P\left\{
            \|A_{\hat\theta}y - \mu\|^2 - \|A_{j_*} y - \mu\|^2 \le \Cr{lem56} \sigma^2(x+\log q)
        \right\} \ge 1-\Cr{series-probability} e^{-x}.
    \end{equation}
\end{theorem}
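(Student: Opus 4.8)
The plan is to reuse, essentially verbatim, the deterministic reduction that underlies \Cref{thm:main}, and then to replace its single-family stochastic control by a union bound over the $q$ families. First I would record the master inequality behind \Cref{thm:main}, which uses no ordered-smoother structure whatsoever and follows only from the convexity and first-order optimality of the quadratic program \eqref{optimization-problem} together with the bias--variance rewriting \eqref{eq:objective-function-clearly-convex}. Writing $f=A_{\hat\theta}y$, $v_j=A_jy$, $r_j=\|v_j-\mu\|^2$, and letting $j_*$ denote the (nonrandom) index of minimal expected risk, comparing $\hat\theta$ to the vertex $e_{j_*}$ gives
\[
\|f-\mu\|^2+\tfrac12\|f-v_{j_*}\|^2 \le r_{j_*}+2\sigma^2\,\trace(A_{j_*}-A_{\hat\theta})+2\langle f-v_{j_*},\,\eps\rangle .
\]
Rearranging, the excess loss is dominated by a single localized functional,
\[
\|f-\mu\|^2-r_{j_*}\le \Psi(A_{\hat\theta}),\qquad \Psi(A):=2\langle (A-A_{j_*})y,\,\eps\rangle-\tfrac12\|(A-A_{j_*})y\|^2-2\sigma^2\,\trace(A-A_{j_*}).
\]
It therefore suffices to bound $\Psi(A_{\hat\theta})$ with the exponential tail of \eqref{multiple-famiilies-probability}; the expectation bound \eqref{multiple-families-mean} then follows by integrating that tail.

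Next I would recall that the proof of \Cref{thm:main} amounts precisely to showing $\Psi(A_{\hat\theta})\le C_1\sigma^2 x$ with probability at least $1-C_2e^{-x}$ when all the $A_j$ lie in a single ordered family: there $A_{\hat\theta}$ and $A_{j_*}$ are simultaneously diagonalizable, so $\Psi$ splits along a common eigenbasis into coordinatewise pieces whose ordered eigenvalues keep the effective dimension $O(1)$, producing a constant, log-free bound. I would isolate this step as a probabilistic lemma controlling $\sup\Psi$ over the convex hull of a single ordered family, uniformly and with an $e^{-x}$ tail.

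The plan for the union $\{A_1,\dots,A_M\}\subseteq F_1\cup\cdots\cup F_q$ is then to dominate $\Psi(A_{\hat\theta})$ by the maximum over the $q$ families of the corresponding per-family localized suprema, and to apply the single-family lemma to each $F_k$ with the tail parameter inflated from $x$ to $x+\log q$. Since $e^{-(x+\log q)}=e^{-x}/q$, a union bound over the $q$ families yields total failure probability $q\cdot C_2e^{-(x+\log q)}=C_2e^{-x}$, which is exactly the form of \eqref{multiple-famiilies-probability}; the inflation of $x$ by $\log q$ is precisely the additive $C_2\sigma^2\log q$ that appears in \eqref{multiple-families-mean} after integration.

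The main obstacle is the reduction from $\Psi(A_{\hat\theta})$ to per-family quantities. The estimator $A_{\hat\theta}$ is a convex combination that may genuinely spread mass across several families, and matrices from distinct families need neither commute nor share an eigenbasis, so the coordinatewise splitting driving \Cref{thm:main} is unavailable for $A_{\hat\theta}$ itself. Worse, the only localizer the master inequality produces, $\tfrac12\|(A_{\hat\theta}-A_{j_*})y\|^2$, is a single global quantity that does \emph{not} decompose into a sum of per-family localizers---by convexity of $\|\cdot\|^2$ the inequality points the wrong way. The crux is thus to show that this localized functional, evaluated at a cross-family convex combination, is still governed by the worst single-family contribution plus a selection cost, so that within-family tuning stays cost-free and only the choice of family is charged $\log q$. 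I expect to handle this by writing $A_{\hat\theta}=\sum_{k=1}^q w_kA^{(k)}$ with $A^{(k)}$ in the convex hull of $F_k$, bounding the linear part of $\Psi$ family by family, and using the global localizer together with the self-normalizing nature of the $Q$-aggregation penalty to absorb the cross terms; checking that exactly $\log q$ survives---rather than $\log M$ or a term depending on the geometry of the union---is the delicate point.
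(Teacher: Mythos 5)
Your high-level plan (a single-family bound applied to each $F_k$, then a union bound at tail level $x+\log q$, then integration) is exactly the paper's, and that final step goes through verbatim. But there is a genuine gap at your very first step, and it is precisely the obstacle your last paragraph struggles with: you weakened the deterministic inequality too early. The first-order optimality condition of \eqref{optimization-problem} does not merely give $\|f-\mu\|^2-r_{j_*}\le\Psi(A_{\hat\theta})$; it gives the stronger bound of \Cref{lemma:deterministic},
\begin{equation*}
\|A_{\hat\theta}y-\mu\|^2-\|A_{j_*}y-\mu\|^2
\le \sum_{j=1}^M \hat\theta_j\Bigl(2\eps^T(A_j-A_{j_*})y-2\sigma^2\trace(A_j-A_{j_*})-\tfrac12\|(A_j-A_{j_*})y\|^2\Bigr)
\le \max_{j=1,\dots,M}\Bigl(2\eps^T(A_j-A_{j_*})y-2\sigma^2\trace(A_j-A_{j_*})-\tfrac12\|(A_j-A_{j_*})y\|^2\Bigr),
\end{equation*}
in which the localizer $-\tfrac12\|(A_j-A_{j_*})y\|^2$ appears \emph{per index} $j$, not at the convex combination. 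Your $\Psi(A_{\hat\theta})$ is obtained from the middle expression by Jensen's inequality applied to the penalty term, and that is exactly the step that destroys decomposability. With the per-index form, the maximum splits trivially as $\max_{k=1,\dots,q}\max_{A_j\in F_k}(\cdots)$, each inner maximum involving only matrices from a single ordered family together with the fixed matrix $A_{j_*}$ (which need not belong to $F_k$; the paper's \Cref{lemma:final-slicing} is stated for a fixed $\bar A$ with $\|\bar A\|_{op}\le 1$ possibly outside $F$ for exactly this reason). So the ``crux'' you identify---controlling a cross-family convex combination of non-commuting matrices and recovering per-family localizers from a global one---is an obstacle the correct argument never meets, and your proposed repair (writing $A_{\hat\theta}=\sum_k w_k A^{(k)}$ and ``absorbing the cross terms'') is left unproved and is unlikely to go through as stated, since the linear part of $\Psi$ evaluated at $A^{(k)}$ cannot be localized by the single global quadratic.

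A secondary inaccuracy: your account of the single-family case (simultaneous diagonalizability of $A_{\hat\theta}$ and $A_{j_*}$, coordinatewise splitting of $\Psi$) is not how \Cref{thm:main} is proved, nor could it be, because $A_{\hat\theta}$ is never analyzed spectrally at all. After \Cref{lemma:deterministic}, everything reduces to bounding $\sup_{B\in F}\bigl(Z_B-\tfrac12 d(B,\bar A)^2\bigr)$, an empirical-process supremum over the family, which the paper controls by generic chaining and slicing; the ordered-smoother structure enters only through the bound $\gamma_1(F,d)+\gamma_2(F,d)\le C\,\Delta(F,d)$ of \Cref{lemma:bound-on-gamma-functionals}. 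Once you replace your master inequality by \Cref{lemma:deterministic}, your union-bound plan with $x\mapsto x+\log q$ completes the proof exactly as in the paper.
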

We now allow not only one family of ordered linear smoothers, but several.
Above, $q$ denotes the number of families. This setting was considered in
\cite{kneip1994ordered}, although with a regret bound of the form
$\sqrt{R^*}\sigma\log(q)^2 + \sigma^2 \log(q)^4$ where $R^*=\min_{j=1,...,M}\E[\|A_j y - \mu\|^2]$; \Cref{thm:multi.family} improves both the dependence in $R^*$ and in $q$.
Let us also note that the
dependence in $q$ in the above bound \eqref{multiple-famiilies-probability}
is optimal \cite[Proposition 2.1]{bellec2014affine}.

The above result is typically useful in situations where several Tikhonov
penalty matrices $K_1,...,K_q$ are candidate. For each $m=1,...,q$, the penalty matrix
is $K_m$, the practitioner chooses a grid of $b_m \ge 1$ tuning  parameters,
say, $\{\lambda^{(m)}_{a}, a=1,...,b_m \}$.
If the matrices $A_1,...,A_M$ are such that
$$\{A_1,...,A_M\} = \cup_{m=1}^q \{X (X^TX + \lambda_a^{(m)} K_m)^{-1}X^T, a=1,...,b_m \},
$$
so that $M=\sum_{m=1}^q b_m$,
the procedure \eqref{optimization-problem}
enjoys the regret bound
$$\E[\|A_{\hat\theta}y - \mu\|^2]-\min_{m=1,...,q}\min_{a=1,...,b_m}\E[\|X\hat w(K_m,\lambda_a) - \mu\|^2] \le \C\sigma^2(1+\log q)
$$
and a similar bound in probability.
That is, the procedure of \Cref{sec:construction-estimator} automatically
adapts to both the best penalty matrix and the best tuning parameter. The error
term $\sigma^2(1+\log q)$ only depends on the number of regularization
matrices used, not on the cardinality of the grids of tuning parameters.

\section{Proofs}
We start the proof with the following deterministic result.

\begin{lemma}[Deterministic inequality]
    \label{lemma:deterministic}
    Let $A_1,...,A_M$ be square matrices of size $n\times n$ and consider the procedure \eqref{optimization-problem}
    in the unknown mean model \eqref{model}.
    Then for any $\bar A\in\{A_1,...,A_M\}$,
    \begin{equation*}
        \|A_{\hat \theta} y - \mu\|^2
        - 
        \|\bar A y - \mu\|^2
        \le
        \max_{j=1,...,M}
        \Big(
        2\eps^T(A_j - \bar A)y
        - 2\sigma^2\trace(A_j - \bar A)
        - \tfrac 1 2 \|(A_j-\bar A)y\|^2
        \Big).
    \end{equation*}
\end{lemma}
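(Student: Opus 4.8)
The plan is to combine the convexity of the objective in \eqref{optimization-problem} with the exact quadratic structure revealed by the rewriting \eqref{eq:objective-function-clearly-convex}. Write the objective as $H(\theta) = Q(\theta) + L(\theta)$, where $Q(\theta) = \tfrac12\|A_\theta y - y\|^2$ is the only genuinely quadratic part and $L(\theta) = 2\sigma^2\trace(A_\theta) + \tfrac12\sum_{j=1}^M \theta_j\|A_j y - y\|^2$ is linear in $\theta$. Because $A_\theta y = \sum_j \theta_j A_j y$ is linear in $\theta$, the Hessian of $H$ is the Gram matrix of the vectors $A_1 y,\dots,A_M y$, so the second-order Taylor expansion of $H$ is exact with remainder $\tfrac12\|A_{\theta'}y - A_\theta y\|^2$; that is, for all $\theta,\theta'\in\simplex$,
\[
    H(\theta') = H(\theta) + \langle \nabla H(\theta),\, \theta' - \theta\rangle + \tfrac12\|A_{\theta'}y - A_\theta y\|^2 .
\]

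First I would specialize this to $\theta = \hat\theta$ and $\theta' = \bar\theta$, the vertex of $\simplex$ with $A_{\bar\theta} = \bar A$. Since $\hat\theta$ minimizes the convex function $H$ over the convex set $\simplex$, the first-order optimality condition gives $\langle \nabla H(\hat\theta),\, \bar\theta - \hat\theta\rangle \ge 0$, and the identity above becomes the \emph{improvement inequality}
\[
    H(\hat\theta) + \tfrac12\|(\bar A - A_{\hat\theta})y\|^2 \le H(\bar\theta) = C_p(\bar A),
\]
where $H(\bar\theta) = C_p(\bar A)$ because the $Q$-aggregation penalty vanishes at a vertex. Evaluating $H(\hat\theta)$ through the original form of the objective, $H(\hat\theta) = C_p(A_{\hat\theta}) + \tfrac12\sum_j \hat\theta_j\|(A_{\hat\theta}-A_j)y\|^2$, this rearranges into a bound on $C_p(A_{\hat\theta}) - C_p(\bar A)$ carrying two nonpositive quadratic terms.

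Next I would pass from $C_p$-differences to prediction-error differences. Using $y = \mu + \eps$, every matrix $A$ satisfies the pointwise identity $\|Ay-\mu\|^2 = \|Ay-y\|^2 + 2\eps^T(Ay-y) + \|\eps\|^2$; subtracting this for $\bar A$ from the one for $A_{\hat\theta}$ cancels $\|\eps\|^2$ and yields
\[
    \|A_{\hat\theta}y-\mu\|^2 - \|\bar A y-\mu\|^2 = \big[C_p(A_{\hat\theta}) - C_p(\bar A)\big] - 2\sigma^2\trace(A_{\hat\theta}-\bar A) + 2\eps^T(A_{\hat\theta}-\bar A)y .
\]
Inserting the bound on $C_p(A_{\hat\theta}) - C_p(\bar A)$ from the previous step controls the left-hand side by $2\eps^T(A_{\hat\theta}-\bar A)y - 2\sigma^2\trace(A_{\hat\theta}-\bar A) - \tfrac12\sum_j\hat\theta_j\|(A_{\hat\theta}-A_j)y\|^2 - \tfrac12\|(\bar A - A_{\hat\theta})y\|^2$.

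Finally, I would reorganize this into a convex combination over $j$. Because $A_{\hat\theta} = \sum_j\hat\theta_j A_j$ and $\sum_j\hat\theta_j = 1$, the two terms linear in $A$ split as $\sum_j\hat\theta_j\big[2\eps^T(A_j-\bar A)y - 2\sigma^2\trace(A_j-\bar A)\big]$, while the two leftover quadratic terms collapse through the exact Pythagorean identity (valid since $A_{\hat\theta}y$ is the $\hat\theta$-weighted mean of the $A_j y$)
\[
    \sum_j\hat\theta_j\|(A_j - A_{\hat\theta})y\|^2 + \|(\bar A - A_{\hat\theta})y\|^2 = \sum_j\hat\theta_j\|(A_j-\bar A)y\|^2 .
\]
Together these show the left-hand side is at most $\sum_j\hat\theta_j S_j$ with $S_j = 2\eps^T(A_j-\bar A)y - 2\sigma^2\trace(A_j-\bar A) - \tfrac12\|(A_j-\bar A)y\|^2$, and bounding a convex combination by its maximum gives $\max_j S_j$, which is the claim. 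The argument is entirely deterministic, so I expect no genuine obstacle; the step most prone to a slip is this last reorganization, where one must check that the penalty coefficient $\tfrac12$ in \eqref{optimization-problem} matches the curvature constant $\tfrac12$ of $Q$, so that the strong-convexity remainder and the $Q$-aggregation penalty telescope \emph{exactly} into $\tfrac12\sum_j\hat\theta_j\|(A_j-\bar A)y\|^2$ with no leftover positive term. This exact cancellation is the heart of the $Q$-aggregation construction, and the resulting negative term $-\tfrac12\|(A_j-\bar A)y\|^2$ inside $S_j$ is precisely what the subsequent stochastic control of $\max_j S_j$ will exploit.
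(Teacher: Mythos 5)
Your proof is correct and follows essentially the same route as the paper: the first-order optimality condition $\langle\nabla H(\hat\theta),\bar\theta-\hat\theta\rangle\ge 0$ for the convex program \eqref{optimization-problem}, reorganized algebraically into a convex combination $\sum_j\hat\theta_j S_j$ that is then bounded by $\max_j S_j$. The paper compresses the entire algebraic rewriting into the phrase ``can be equivalently rewritten as'' (deferring to \cite[Proposition 3.2]{bellec2014affine}), and your exact second-order expansion together with the Pythagorean identity is precisely the computation behind that equivalence, including the exact cancellation of the curvature term against the $Q$-aggregation penalty.
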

\begin{proof}
    The above is proved in \cite[Proposition 3.2]{bellec2014affine}. We reproduce the short proof here for completeness: If $H:\simplex\to\R$ is the convex objective of \eqref{optimization-problem} and $\bar A = A_k$ for some $k=1,...,M$, the optimality condition
    of \eqref{optimization-problem} states that
    $\nabla H(\hat\theta)(e_k-\hat\theta)\ge0$ holds (cf. \cite[(4.21)]{boyd2009convex}).
    Then $\nabla H(\hat\theta)(e_k-\hat\theta)\ge0$  can be equivalently rewritten as
    $$\|A_{\hat \theta} y - \mu\|^2
        - 
        \|\bar A y - \mu\|^2
        \le
        \sum_{j=1}^M\hat\theta_j
        \Big(
        2\eps^T(A_j - \bar A)y
        - 2\sigma^2\trace(A_j - \bar A)
        - \tfrac 1 2 \|(A_j-\bar A)y\|^2
        \Big).
    $$
    The proof is completed by noting that the average $\sum_{j=1}^M\hat\theta_j
    a_j$ with weights $\hat\theta=(\hat\theta_1,...,\hat\theta_M)\in\simplex$
    is smaller than the maximum $\max_{j=1,...,M}a_j$ for every reals
    $a_1,...,a_M$.
\end{proof}

Throughout the proof, $\bar A$ is a fixed deterministic matrix with $\|\bar A\|_{op}\le 1$.
Our goal is to bound from above the right hand side of \Cref{lemma:deterministic} with high
probability. To this end, define the process $(Z_B)_{B}$ indexed by
symmetric matrices $B$ of size $n\times n$, by
$$Z_B=
    2\eps^T (B-\bar A) y
    - 2\sigma^2\trace(B-\bar A)
    - \tfrac 1 2 (\|(B-\bar A)y\|^2 - d(B,\bar A)^2)
$$
where $d$ is the metric
\begin{equation}
    \label{def-metric}
    d(B,A)^2 \triangleq \E[\|(B-A)y\|^2] = \sigma^2\|B-A\|_F^2 + \|(B-A)\mu\|^2,
    \qquad A,B\in\R^{n\times n}.
\end{equation}
With this definition, the quantity inside the parenthesis in the right hand side
of \Cref{lemma:deterministic} is exactly $Z_{A_j} - \tfrac 1 2 d(A_j,\bar A)$.
We split the process $Z_B$ into a Gaussian part and a quadratic part. Define
the processes $(G_B)_B$ and $(W_B)_B$ by
\begin{align}
    G_B &= \eps^T [2I_{n\times n} - (B-\bar A)/2](B-\bar A)\mu,
          \\
          W_B &= 2\eps^T (B-\bar A)\eps - 2\sigma^2\trace(B-\bar A)
- \tfrac 1 2 \eps^T (B-\bar A)^2 \eps + \tfrac{\sigma^2}{2} \|B-\bar A\|_F^2.
\end{align}
Before bounding supremum of the above processes, we need to derive the following
metric property of ordered linear smoothers.
If $T$ is a subset of the space of symmetric matrices of size $n\times n$
and if $d$ is a metric on $T$, the diameter $\Delta(T,d)$ of $T$ and
the Talagrand generic chaining functionals for each $\alpha=1,2$ are defined by
\begin{equation}
    \label{eq:gamma-functional-definition}
    \Delta(T,d) = \sup_{A,B\in T} d(A,B),
    \qquad\qquad
    \gamma_\alpha(T,d) = \inf_{(T_k)_{k\ge 0}} \sup_{t\in T} \sum_{k=1}^{+\infty} 2^{k/\alpha} d(t, T_k)
\end{equation}
where the infimum is over all sequences $(T_k)_{k\ge 0}$ of subsets of $T$ such that $|T_0|=1$ and
$|T_k| \le 2^{2^k}$.

\begin{lemma}
    \label{lemma:bound-on-gamma-functionals}
    Let $a\ge0$ and let $\mu\in\R^n$.
    Let $F\subset \R^{n\times n}$ be a set of ordered linear smoothers (cf. \Cref{def:ordered-linear-smoothers}) and let $d$ be any semi-metric of the form
    $d(A,B)^2 = a \|A-B\|_F^2 + \|(A-B)\mu\|^2$.
    Then
    $\gamma_2(F,d) + \gamma_1(F,d) \le \Cl{diameter} \Delta(F,d)$ where $\Cr{diameter}$ is an absolute constant.
\end{lemma}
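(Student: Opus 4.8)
The plan is to use the fact that an ordered family, though sitting in an $n$-dimensional matrix space, is intrinsically one-dimensional, and to reduce both chaining functionals to a chaining bound for a single bounded interval equipped with a square-root (snowflaked) metric.

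First I would pass to eigenvalue coordinates. By conditions (ii)--(iii) of \Cref{def:ordered-linear-smoothers} and the representation \eqref{eq:linear-smoothers}, every $A\in F$ is diagonal in one common orthonormal basis $(u_1,\dots,u_n)$, say $A=\sum_{i=1}^n \alpha_i(A)\,u_iu_i^T$ with $\alpha_i(A)\in[0,1]$, and the family is totally ordered by $\preceq$, with all eigenvalue coordinates $\alpha_i$ moving monotonically in the same direction along the chain. Writing $w_i=a+\langle u_i,\mu\rangle^2\ge 0$, the semi-metric of the lemma takes the weighted Euclidean form
\[
 d(A,B)^2=\sum_{i=1}^n w_i\,\bigl(\alpha_i(A)-\alpha_i(B)\bigr)^2 .
\]
Thus $F$ is, isometrically, a totally ordered and coordinatewise-monotone curve in a weighted space.

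The crux is a superadditivity of squared distance along the chain: if $A\preceq B\preceq C$ then $\alpha_i(A)\le\alpha_i(B)\le\alpha_i(C)$ for every $i$, and expanding $(\alpha_i(C)-\alpha_i(A))^2=\bigl((\alpha_i(C)-\alpha_i(B))+(\alpha_i(B)-\alpha_i(A))\bigr)^2$ and discarding the nonnegative cross term gives $d(A,B)^2+d(B,C)^2\le d(A,C)^2$. Reducing first to a finite subfamily (for which $\gamma_1,\gamma_2$ are the suprema over finite subsets, so a minimal element $A_0=\min F$ exists; alternatively take $A_0$ the coordinatewise infimum, which lies below all of $F$ and satisfies $d(A_0,A)\le\Delta(F,d)$), define $\Psi(A)=d(A_0,A)^2$. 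Superadditivity applied to $A_0\preceq A\preceq B$ yields, for every comparable pair $A\preceq B$ and hence for every pair since the chain is totally ordered, the domination $d(A,B)^2\le \Psi(B)-\Psi(A)=|\Psi(A)-\Psi(B)|$. Moreover $\Psi$ takes values in an interval $I\subseteq[0,\Delta(F,d)^2]$.

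Consequently $d$ is dominated by the pullback through $\Psi$ of the snowflaked metric $\rho(s,t)=|s-t|^{1/2}$ on $I$, whose $\rho$-diameter is at most $\Delta(F,d)$. Since the net points for $(\Psi(F),\rho)$ lie in $\Psi(F)$, choosing at each level a preimage in $F$ of every net point turns any admissible sequence for $(\Psi(F),\rho)$ into one for $(F,d)$ with increments $d(A,T_k)\le\rho(\Psi(A),S_k)$; hence $\gamma_\alpha(F,d)\le\gamma_\alpha(\Psi(F),\rho)\le\gamma_\alpha(I,\rho)$ for $\alpha=1,2$, the last step by monotonicity under inclusion. It remains to bound $\gamma_\alpha(I,\rho)$ for an interval of $\rho$-diameter $\Delta\triangleq\Delta(F,d)$: placing $2^{2^k}$ equally spaced points at level $k$ yields $\rho$-distance to the nearest point of order $\Delta\,2^{-2^{k-1}}$, and because this doubly-exponential decay dominates the factor $2^{k/\alpha}$, the series $\sum_{k\ge 1}2^{k/\alpha}\Delta\,2^{-2^{k-1}}$ converges to $O(\Delta)$ for both $\alpha=1$ and $\alpha=2$. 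Summing the two bounds gives $\gamma_1(F,d)+\gamma_2(F,d)\le \Cr{diameter}\,\Delta(F,d)$.

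I expect the superadditivity identity together with the resulting snowflake reduction to be the heart of the argument: it is precisely what collapses the apparent $n$-dimensional complexity of $F$ to that of a single interval, and thereby makes the bound independent of the dimension $n$ and of the cardinality of the family. The remaining points---passing to a finite subfamily to secure a minimal element, and the monotonicity of $\gamma_\alpha$ under a distance-nonincreasing map---are routine.
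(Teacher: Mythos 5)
Your proof is correct and follows essentially the same route as the paper's: your superadditivity inequality $d(A,B)^2+d(B,C)^2\le d(A,C)^2$ along the chain is exactly the paper's cross-term identity, and your $2^{2^k}$ equally spaced points in $\Psi$-coordinates coincide with the paper's bins $\{B\in F:\ j\delta^2\le d(B,B_{\lambda_0})^2<(j+1)\delta^2\}$ built from the bottom element $B_{\lambda_0}$. The snowflake-metric packaging, and your extra care about securing a minimal element (the paper simply writes $\lambda_0=\inf\Lambda$), are only cosmetic differences.
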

\begin{proof}
    We have to specify a sequence  $(T_k)_{k\ge 0}$ of subsets of $F$ with $|T_k|\le 2^{2^k}$.
    Since $F$ satisfies \Cref{def:ordered-linear-smoothers}, there exists a basis of eigenvectors $u_1,...,u_n$, increasing functions
    $\alpha_1,...,\alpha_n:\R\to[0,1]$ and a set $\Lambda\subset \R$ such that
    $F = \{B_\lambda, \lambda \in \Lambda \}$
    where $B_\lambda = \sum_{i=1}^n\alpha_i(\lambda)u_i u_i^T$,
    cf. \eqref{eq:linear-smoothers}.
    Hence for any $\lambda_0,\lambda,\nu\in\Lambda$,
    \begin{align*}
        d(B_\lambda,B_\nu)^2 &= \sum_{i=1}^n
        w_i(\alpha_i(\lambda)-\alpha_i(\nu))^2
        \quad \text{ for weights }\quad
        w_i = (a+(u_i^T \mu)^2) \ge 0,
        \\
        d(B_{\lambda_0},B_\lambda)^2
        +
        d(B_\lambda, B_\nu)^2
        &= 
        d(B_{\lambda_0},B_\nu)^2
        +
        2\sum_{i=1}^n w_i
        (\alpha_i(\lambda)-\alpha_i(\lambda_0))
        (\alpha_i(\lambda)-\alpha_i(\nu))
        .
    \end{align*}
    If $\lambda_0\le\lambda\le\nu$, since each $\alpha_i(\cdot)$ is nondecreasing,
    the sum in the right hand side of the previous
    display is non-positive and $d(B_{\lambda},B_\nu)^2 \le d(B_{\nu},B_{\lambda_0})^2 - d(B_\lambda,B_{\lambda_0})^2$ holds.
    Let $N=2^{2^k}$ and $\delta = \Delta(F,d)/N$. We construct a $\delta$-covering
    of $F$ by considering the bins 
    $\bin_j=\{B\in F: \delta^2 j \le d(B,B_{\lambda_0})^2 < \delta^2 (j+1)\}$ for $j=0,...,N-1$ where $\lambda_0=\inf \Lambda$.
    If $\bin_j$ is non-empty, any of its element is a $\delta$-covering of $\bin_j$ thanks
    to
    $$d(B_\lambda,B_\nu)^2
    \le d(B_\nu,B_{\lambda_0})^2 - d(B_\lambda,B_{\lambda_0})^2
    \le (j+1)\delta^2 - j\delta^2 = \delta^2.
    $$
    for $B_\nu,B_\lambda\in\bin_j$ with $\lambda\le\nu$.
    This constructs a $\delta$-covering of $F$ with $N=2^{2^k}$ elements.
    Hence $\gamma_2(F,d)\le\Delta(F,d) \sum_{k=1}^\infty 2^{k/2}/ 2^{2^k}  = \Delta(F,d) \C$ and the same holds for $\gamma_1(F,d)$ for a different absolute constant.
\end{proof}

\begin{lemma}[The Gaussian process $G_B$]
    \label{lemma:gaussian-process}
    Let $T^*$ be a family of ordered smoothers (cf. \Cref{def:ordered-linear-smoothers})
    such that $\sup_{B\in T^*}d(\bar A,B)\le \delta^*$ for the metric \eqref{def-metric}.
    Then for all $x>0$,
    $$
    \mathbb P(\sup_{B\in T^*}G_B \le \sigma(\C +3\sqrt{2x})\delta^* ) \ge 1 - e^{-x}.
    $$
\end{lemma}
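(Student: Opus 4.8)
The plan is to recognize $G_B$ as a centered Gaussian process in the noise $\eps$ and to bound its supremum by generic chaining, using \Cref{lemma:bound-on-gamma-functionals} to replace the chaining functional with the diameter. Writing $G_B=\eps^T v_B$ with $v_B=[2I_{n\times n}-(B-\bar A)/2](B-\bar A)\mu$, each $G_B$ is a centered Gaussian variable and the canonical metric of the process is $\rho(B,B')=\sqrt{\E[(G_B-G_{B'})^2]}=\sigma\|v_B-v_{B'}\|$. Note that $v_{\bar A}=0$, so $G_{\bar A}=0$ identically; hence $\sup_{B\in T^*}G_B=\sup_{B\in T^*}(G_B-G_{\bar A})$, and for any reference $B_0\in T^*$ one has $\E\sup_{B\in T^*}G_B=\E\sup_{B\in T^*}(G_B-G_{B_0})$ since $\E G_{B_0}=0$.

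The crucial step is to show that $\rho$ is dominated by $d$, namely $\rho(B,B')\le 3\sigma\, d(B,B')$. Here I would use that $T^*$ and $\bar A$ lie in a family of ordered smoothers, so all the matrices share a common eigenbasis $u_1,\dots,u_n$ and commute. Diagonalizing, write $B=\sum_i\beta_i u_iu_i^T$ and $\bar A=\sum_i\bar a_i u_iu_i^T$ with $\beta_i,\bar a_i\in[0,1]$, and set $c_i=\beta_i-\bar a_i\in[-1,1]$. The coordinate of $v_B$ along $u_i$ is $f(c_i)\,(u_i^T\mu)$ with $f(c)=2c-c^2/2$, and since $f'(c)=2-c\in[1,3]$ on $[-1,1]$ the map $f$ is $3$-Lipschitz there, whence
\begin{equation*}
\|v_B-v_{B'}\|^2=\sum_i\bigl(f(c_i)-f(c_i')\bigr)^2(u_i^T\mu)^2\le 9\sum_i(\beta_i-\beta_i')^2(u_i^T\mu)^2=9\|(B-B')\mu\|^2\le 9\,d(B,B')^2,
\end{equation*}
using $c_i-c_i'=\beta_i-\beta_i'$ and $\|(B-B')\mu\|^2\le d(B,B')^2$. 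Thus $\rho(B,B')\le 3\sigma\,d(B,B')$ and in particular $\rho(B,\bar A)\le 3\sigma\,d(B,\bar A)\le 3\sigma\delta^*$.

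With this metric comparison in hand, I would invoke the generic chaining upper bound for Gaussian processes, $\E\sup_{B\in T^*}(G_B-G_{B_0})\le L\,\gamma_2(T^*,\rho)\le 3L\sigma\,\gamma_2(T^*,d)$. Applying \Cref{lemma:bound-on-gamma-functionals} with $a=\sigma^2$ and using $\Delta(T^*,d)\le 2\delta^*$ (triangle inequality through $\bar A$) gives $\E\sup_{B\in T^*}G_B\le C\sigma\delta^*$ for an absolute constant $C$. The deviation term then comes from Gaussian concentration (the Borell--TIS inequality): since $\sup_{B\in T^*}\E[(G_B-G_{\bar A})^2]=\sup_{B\in T^*}\rho(B,\bar A)^2\le 9\sigma^2(\delta^*)^2$, we get $\mathbb P(\sup_{B\in T^*}G_B\ge \E\sup_{B\in T^*}G_B+u)\le\exp\bigl(-u^2/(18\sigma^2(\delta^*)^2)\bigr)$; choosing $u=3\sqrt{2x}\,\sigma\delta^*$ makes the right-hand side $e^{-x}$ and yields the stated bound $\sigma(C+3\sqrt{2x})\delta^*$, the factor $3$ being exactly the Lipschitz constant of $f$.

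The main obstacle is the metric comparison $\rho\le 3\sigma\,d$: without the commuting, ordered structure there would be no common eigenbasis in which to reduce to the scalar Lipschitz estimate, and it is precisely this reduction that allows the chaining functional $\gamma_2(T^*,\rho)$ to be controlled by the diameter through \Cref{lemma:bound-on-gamma-functionals}, rather than incurring a logarithmic factor in the cardinality. The remaining ingredients — the chaining upper bound and Borell--TIS concentration — are standard facts about Gaussian processes.
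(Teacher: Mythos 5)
Your overall architecture (Borell--TIS concentration plus generic chaining, with \Cref{lemma:bound-on-gamma-functionals} converting $\gamma_2$ into a diameter) is the same as the paper's, and your variance bound $\sup_{B\in T^*}\mathrm{Var}(G_B)\le 9\sigma^2(\delta^*)^2$ is fine since it only uses $\|2I_{n\times n}-(B-\bar A)/2\|_{op}\le 3$. But there is a genuine gap in your key step, the metric comparison $\rho\le 3\sigma d$: you diagonalize $\bar A$ in the common eigenbasis $u_1,\dots,u_n$ of $T^*$, i.e., you silently assume that $\bar A$ commutes with the family. This is not a hypothesis of the lemma: the standing assumption in the paper is only that $\bar A$ is a fixed matrix with $\|\bar A\|_{op}\le 1$, the downstream lemmas (\Cref{lemma:one-slice,lemma:final-slicing}) explicitly allow $\bar A\notin F$, and, crucially, in the proof of \Cref{thm:multi.family} the present lemma is applied to each family $F_k$ with $\bar A=A_{j_*}$ taken from a \emph{different} family, which need not commute with $F_k$. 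Without commutation your scalar reduction breaks down: writing $D=B-B'$, one has
\begin{equation*}
 v_B-v_{B'}=2D\mu-\tfrac14 (B+B'-2\bar A)D\mu-\tfrac14 D(B+B')\mu+\tfrac12 D\bar A\mu ,
\end{equation*}
and while the first three terms are bounded by a constant times $\|D\mu\|$ (using $D(B+B')=(B+B')D$), the last term $\tfrac12 D\bar A\mu$ is not. Concretely, take $B,B'$ diagonal in $(u_i)$ with $D=\beta u_2u_2^T$, $\mu=t u_1$ (so $D\mu=0$ and $d(B,B')=\sigma\beta$), and $\bar A=u_1u_2^T+u_2u_1^T$; then $\|v_B-v_{B'}\|=t\beta/2$, so $\rho(B,B')/d(B,B')\to\infty$ as $t\to\infty$ and no inequality $\rho\le C\sigma d$ can hold.

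This non-commuting case is exactly the difficulty the paper's proof is designed to circumvent: it introduces $P$, the projection of $\bar A$ onto the convex hull of $T^*$ with respect to the Hilbert metric $d$. Since $P$ commutes with the family, the offending term splits as $D\bar A\mu=PD\mu+D(\bar A-P)\mu$; the first piece is again controlled by $\|D\mu\|$, and the residual defines a second semi-metric $d_2(B,B')=\|(B-B')(\bar A-P)\mu\|$, which is of the form covered by \Cref{lemma:bound-on-gamma-functionals} (with $\mu$ replaced by $(\bar A-P)\mu$) and whose diameter is at most $2\|(\bar A-P)\mu\|\le 2d(P,\bar A)\le 2d(B_0,\bar A)\le 2\delta^*$ by the minimizing property of the projection. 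Your argument is correct in the special case where $\bar A$ commutes with $T^*$ (which suffices for \Cref{thm:main}, where $\bar A=A_{j_*}\in F$), but to prove the lemma as stated, and as needed for \Cref{thm:multi.family}, you must add this projection step or an equivalent device.
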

\begin{proof}
    By the Gaussian concentration theorem \cite[Theorem 5.8]{boucheron2013concentration},
with probability at least $1-e^{-x}$ we have
\begin{align}
    \sup_{B\in T^*} G_B &\le
\E\sup_{B\in T^*} G_B + \sigma\sqrt{2x} \sup_{B\in T^*}\|[2I_{n\times n} - (B-\bar A)/2](B-\bar A)\mu\|.
\\
&\le
\C \gamma_2(T^*,d_G) + \sigma\sqrt{2x} \sup_{B\in T^*} 3 \|(B-\bar A)\mu\|
\end{align}
where for the second inequality we used Talagrand's majorizing measure theorem (cf., e.g.,
\cite[Section 8.6]{vershynin2018high}) and the fact that $B,\bar A$ have operator norm at most one, where $d_G$ is the canonical metric of the Gaussian process,
$$d_G(A,B)^2 = \E[(G_A-G_B)^2].
$$
If $D=B-A$ is the difference and $P$ commute with $A$ and $B$,
$$G_B-G_A =
    \epsilon^T\left[2D\mu-\tfrac 1 2 (A+B-2\bar A)D\mu-\tfrac 1 2 D(A+B-2P)\mu\right]
     +\epsilon^T D\left(\bar{A}-P\right)\mu.
$$
By the triangle inequality and using that $A,B,P,\bar A$ have operator norm at most one,
$d_G(A,B) \le 6\sigma\|D\mu\| + \sigma\|D(\bar A-P)\mu\|.
$
This shows that 
$$\gamma_2(T^*,d_G)\le 6\sigma \gamma_2(T^*,d_1) + \sigma \gamma_2(T^*,d_2)$$
where $d_1(A,B)=\|(B-A)\mu\|$ and $d_2(A,B)=\|(A-B)(\bar A - P)\mu\|$.
By \Cref{lemma:bound-on-gamma-functionals},
$\gamma_2(T^*,d_1)\le \C \Delta(T^*,d_1)$ and similarly for $d_2$
(note that $d_2$ is similar to $d_1$ with $\mu$ replaced by $\mu'=(P-\bar A)\mu$).

If $\sup_{B\in T^*}d(B,\bar A)\le \delta^*$ for the metric $d$ in \eqref{def-metric},
then $\sup_{B\in T^*}\|(B-\bar A)\mu\|\le \delta^*$ 
and $\Delta(T^*,d_1)\le 2 \delta^*$.
Furthermore if $P$ is the convex projection of $\bar A$ onto the convex hull of $T^*$ with
respect to the Hilbert metric $d$ in \eqref{def-metric}, then
$$
\Delta(T^*,d_2)=\sup_{B,B'\in T^*}
d_2(B,B')
\le
2
\|(P-\bar A)\mu\|
\le
2 d(P,\bar A)
\le
2 d(B_0,\bar A)
\le 2\delta^*
$$
for any $B_0\in T^*$ where we used that by definition of the convex projection,
$d(P,\bar A)\le d(B_0,\bar A)$.
\end{proof}
The following inequality, known as the Hanson-Wright inequality, will be useful
for the next Lemma.
If $\eps\sim N(0,\sigma^2 I_{n\times n})$ is standard normal, then
\begin{equation}
    \label{hanson-wright}
    \mathbb P\Big[|\eps^T Q \eps - \sigma^2 \trace Q |> 2\sigma^2(\|Q\|_F\sqrt{x} + \|Q\|_{op}x )
    \Big]\le 2 e^{-x},
\end{equation}
for any square matrix $Q\in\R^{n\times n}$.
We refer to \cite[Example 2.12]{boucheron2013concentration} for a proof for normally
distributed $\eps$ and
\cite{rudelson2013hanson,hsu2012tail,bellec2014bernstein,adamczak2015note} for
proofs of \eqref{hanson-wright} in the sub-gaussian case. 

\begin{lemma}[The Quadratic process $W_B$]
    \label{lemma:quadratic-process}
    Let $T^*$ be a family of ordered smoothers (cf. \Cref{def:ordered-linear-smoothers})
    such that $\sigma\|B-\bar A\|_F\le \delta^*$ for all $B\in T^*$. Then for all $x>0$,
    $$
    \mathbb P\Big(
        \sup_{B\in T^*} W_B \le \C \sigma\delta^* + \C\sigma\sqrt{x}\delta^* + \C \sigma^2x 
    \Big) \ge 1 - 2e^{-x}.
    $$
\end{lemma}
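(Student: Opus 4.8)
The plan is to recognize $(W_B)_{B\in T^*}$ as a centered quadratic chaos and then bound its supremum by a two--metric generic chaining argument, converting the chaining functionals into the single scale $\delta^*$ via \Cref{lemma:bound-on-gamma-functionals}. Writing $D=B-\bar A$ and using that $D$ is symmetric, so that $\trace(D^2)=\|D\|_F^2$, I would first rewrite
$$W_B=\eps^T Q_B\,\eps-\sigma^2\trace(Q_B),\qquad Q_B=2D-\tfrac12 D^2 .$$
Thus $W_B$ is a centered quadratic form and, by the Hanson--Wright inequality \eqref{hanson-wright} applied to $Q_B-Q_{B'}$, the increments of the process obey the mixed sub-Gaussian/sub-exponential tail
$$\mathbb P\Big(|W_B-W_{B'}|>2\sigma^2\big(\|Q_B-Q_{B'}\|_F\sqrt x+\|Q_B-Q_{B'}\|_{op}\,x\big)\Big)\le 2e^{-x}.$$
This identifies a sub-Gaussian metric $d_{\mathrm{sg}}(B,B')=2\sigma^2\|Q_B-Q_{B'}\|_F$, paired with the $\gamma_2$ functional and the $\sqrt x$ deviation channel, and a sub-exponential metric $d_{\mathrm{se}}(B,B')=2\sigma^2\|Q_B-Q_{B'}\|_{op}$, paired with $\gamma_1$ and the $x$ channel.

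Next I would reduce these matrix metrics to the Frobenius and operator metrics of the smoothers. Since $B$ and $\bar A$ have operator norm at most one, $\|D\|_{op}\le 2$; writing $Q_B-Q_{B'}=2(D-D')-\tfrac12\big(D(D-D')+(D-D')D'\big)$ and using sub-multiplicativity of the operator norm gives
$$\|Q_B-Q_{B'}\|_F\le 4\|B-B'\|_F,\qquad \|Q_B-Q_{B'}\|_{op}\le 4\|B-B'\|_{op}\le 4\|B-B'\|_F .$$

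With these bounds I would invoke a generic chaining deviation inequality for processes with Bernstein-type increments (e.g. Dirksen's two--metric chaining theorem), which, for a fixed anchor $B_0\in T^*$, controls $\sup_{B\in T^*}(W_B-W_{B_0})$ on an event of probability at least $1-e^{-x}$ by
$$\C\big(\gamma_2(T^*,d_{\mathrm{sg}})+\gamma_1(T^*,d_{\mathrm{se}})\big)+\C\big(\sqrt x\,\Delta(T^*,d_{\mathrm{sg}})+x\,\Delta(T^*,d_{\mathrm{se}})\big).$$
The displayed norm estimates show that $d_{\mathrm{sg}}$ and $d_{\mathrm{se}}$ are both bounded by $8\sigma$ times the metric $\sigma\|\cdot\|_F$, so \Cref{lemma:bound-on-gamma-functionals} with $\mu=0$ and $a=\sigma^2$ yields $\gamma_2(T^*,d_{\mathrm{sg}})+\gamma_1(T^*,d_{\mathrm{se}})\le\C\,\sigma\,\Delta(T^*,\sigma\|\cdot\|_F)\le\C\sigma\delta^*$, where the last step uses the hypothesis $\sigma\|B-\bar A\|_F\le\delta^*$. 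The sub-Gaussian diameter obeys $\Delta(T^*,d_{\mathrm{sg}})\le\C\sigma\delta^*$ as well, producing the $\sigma\sqrt x\,\delta^*$ term, while the sub-exponential diameter is bounded \emph{not} by $\delta^*$ but by the crude estimate $\|B-B'\|_{op}\le 1$ (the eigenvalues of ordered smoothers lie in $[0,1]$), giving $\Delta(T^*,d_{\mathrm{se}})\le\C\sigma^2$ and hence the $\sigma^2 x$ term. Finally I would dispose of the anchor $W_{B_0}$ by a direct application of Hanson--Wright, which contributes terms of order $\sigma\sqrt x\,\delta^*+\sigma^2 x$ since $\sigma^2\|Q_{B_0}\|_F\le\C\sigma\delta^*$ and $\|Q_{B_0}\|_{op}\le\C$. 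Summing these estimates gives the asserted inequality, the $2e^{-x}$ in the failure probability coming from the union of the chaining event and the anchor event.

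The step I expect to be the main obstacle is the bookkeeping in the two--metric chaining: one must correctly pair the Frobenius metric with the $\gamma_2$/$\sqrt x$ channel and the operator metric with the $\gamma_1$/$x$ channel, and recognize that it is precisely the operator--norm diameter, bounded by an absolute constant rather than by $\delta^*$, that is responsible for the $\sigma^2 x$ term and thereby keeps the final bound independent of $M$ and of the ambient dimension. Everything else reduces, through the norm estimates above, to a single application of \Cref{lemma:bound-on-gamma-functionals}, which is where the ordered--smoother hypothesis does its work.
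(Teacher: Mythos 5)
Your proof is correct, and it takes a genuinely different route from the paper's. The paper proceeds in two steps: it first invokes Adamczak's concentration inequality for suprema of quadratic processes (Theorem 2.4 of the cited note) to control the deviation of $\sup_{B\in T^*}W_B$ around its mean, with deviation coefficients $\sigma\sqrt{x}\,\E\|Q_B\eps\|$ and $x\sigma^2\|Q_B\|_{op}$ --- and there the ordered-smoother structure is used a second time, via the extremal matrices $B_0\preceq B\preceq B_1$ of the family, to bound $\|Q_B\eps\|$ uniformly; it then bounds the remaining expectation $\E\sup_{B\in T^*}(W_B-W_{B_0})$ by generic chaining with both channels collapsed to the Frobenius metric, finishing with \Cref{lemma:bound-on-gamma-functionals}. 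You instead apply the tail-bound form of Dirksen's two-metric chaining theorem directly and handle the anchor $W_{B_0}$ with a single application of \eqref{hanson-wright}; this bypasses Adamczak's theorem and the extremal-matrix argument altogether, so the ordered-smoother hypothesis enters only once, through \Cref{lemma:bound-on-gamma-functionals}. The step you flagged as the crux is indeed the crux: the sub-exponential channel must be run with the operator-norm metric, whose diameter is $O(\sigma^2)$, rather than the Frobenius metric, whose diameter is only $O(\sigma\delta^*)$; collapsing both channels to Frobenius (as the paper can afford to do, since it only extracts an expectation bound from the chaining step) would produce a deviation term of order $\sigma\delta^* x$ instead of $\sigma^2 x$. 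In the paper the $\sigma^2 x$ term comes instead from $\|Q_B\|_{op}\le 6$ inside Adamczak's inequality, so both arguments ultimately trace this term to operator-norm boundedness. Two cosmetic points: your union bound gives failure probability $3e^{-x}$ (Dirksen's event at $e^{-x}$ plus the two-sided Hanson--Wright at $2e^{-x}$ for the anchor), not $2e^{-x}$; either use the one-sided upper tail for the anchor or note that the claim is vacuous for $x<\log 2$ and absorb the difference into the constants for $x\ge \log 2$. Similarly, Dirksen's tail bound is stated for $u\ge 1$, so for $\log 2\le x<1$ apply it at $u=1$ and enlarge the constants.
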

\begin{proof}
    We apply Theorem 2.4 in \cite{adamczak2015note} which implies that
    if $W_B=\eps^TQ_B\eps - \trace[Q_B]$ where $\eps\sim N(0,I_{n\times n})$
    and $Q_B$ is a symmetric matrix of size $n\times n$ for every $B$, then
    $$
    \mathbb P\Big(
    \sup_{B\in T^*} W_B
    \le
    \E
    \sup_{B\in T^*} W_B
    + \C \sigma \sqrt x \sup_{B\in T^*}
    \E\|Q_B \eps\|
    + \C x \sigma^2 \sup_{B\in T^*} \|Q_B\|_{op}
    \Big) \ge 1 - 2e^{-x}.
    $$
    For the third term, $Q_B=2(B-\bar A) - (B-\bar A)^2/2$ hence
    $\|Q_B\|_{op}\le 6$ because $B,\bar A$ both have operator norm at most one.
    For the second term, since $T^*$ is a family of ordered linear smoothers,
    there exists extremal matrices $B_0,B_1\in T^*$ such
    that $B_0\preceq B \preceq B_1$ for all $B\in T^*$;
    we then have $B-B_0 \preceq B_1-B_0$ and
    $$
        \|Q_B\eps\|
        \le 3\|(B-\bar A)\eps\|
        %\le 3 \|(B-B_0)\eps\| + 3 \|(B_0-\bar A)\eps\|
        \le 3 \|(B_1-B_0)\eps\| + 3 \|(B_0-\bar A)\eps\|
        \le 3 \|(B_1-\bar A)\eps\| + 6 \|(B_0-\bar A)\eps\|.
    $$
    Hence  $\E\|Q_B\eps\| \le \E[\|Q_B\eps\|^2]^{1/2}\le 3\sigma \|B_1-\bar A\|_F + 6\sigma \|B_0-\bar A\|_F \le 9 \delta^*$.

    We finally apply a generic chaining upper bound to bound $\E\sup_{B\in T^*} W_B$.
    For any fixed $B_0\in T^*$ we have $\E[W_{B_0}]=0$ hence
    $\E\sup_{B\in T^*} W_B =\E\sup_{B\in T^*}(W_B-W_{B_0})$.
    For two matrices $A,B\in T^*$ we have
    $W_B-W_A = \eps^T(Q_B-Q_A)\eps-\trace[Q_B-Q_A]$,
    and
    $$\eps^T(Q_B-Q_A)\epsilon = \eps^T[(B-A)(2I_{n\times n} - \tfrac 1 2 (A+B-2\bar A) ) ] \eps,$$ 
    hence by the Hanson-Wright inequality \eqref{hanson-wright}, with probability at least
    $1-2e^{-x}$,
    $$
        |W_B-W_A|
        \le 2\sigma^2\|
            (B-A)(2 I_{n\times n} - \tfrac 1 2 (A+B-2\bar A) )
        \|_F(\sqrt x + x)
        \le 8 \sigma^2 \|A-B\|_F(x+\sqrt x).
    $$
    Hence by the generic chaining bound given in Theorem 3.5 in \cite{dirksen2015tail},
    we get that
    $$\E\sup_{B\in T^*}|W_B-W_{B_0}| \le \C\sigma^2 \left[ \gamma_1(T^*,\|\cdot\|_F) + \gamma_2(T^*,\|\cdot\|_F) + \Delta(T^*,\|\cdot\|_F) \right].
    $$
    For each $\alpha=1,2$ we have
    $\gamma_\alpha(T^*,\|\cdot\|_F)\le \C \Delta(T^*,\|\cdot\|_F)$ by \Cref{lemma:bound-on-gamma-functionals}.
    Since $\sigma\|B-\bar A\|\le \delta^*$ for any $B\in T^*$,
    we obtain $\Delta(T^*,\|\cdot\|_F)\le 2 \delta^*/\sigma$.
\end{proof}

\begin{lemma}
    \label{lemma:one-slice}
    Suppose $F$ is a family of $n\times n$ ordered linear smoothers (cf.
    \Cref{def:ordered-linear-smoothers}), and $\bar A$ is a fixed matrix with $\|\bar A\|_{op}\leq 1$ which may not belong to $F$.
    Let $d$ be the metric \eqref{def-metric}. 
    Then for any reals $u\ge 1,$ and $\delta^* > \delta_*\ge 0$, we have with probability at least $1-3 e^{-u}$,
    $$
    \sup_{B\in F:\; \delta_* \le d(B,\bar A) < \delta^*}
    \left(Z_B - \tfrac 1 2 d(B,\bar A)^2 \right)
    \le \Cl{lem54-upper-1}\left[\sigma^2 u + \delta^* \sigma \sqrt{u}\right] - \tfrac 1 2 \delta_*^2
    \le \Cl{lem54-upper-2}\sigma^2 u + \tfrac{1}{16}(\delta^*)^2 - \tfrac 1 2 \delta_*^2.
    $$
\end{lemma}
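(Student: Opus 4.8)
The plan is to use the additive decomposition $Z_B = G_B + W_B$ established just before the lemma and to control the two suprema separately via the Gaussian and quadratic process lemmas. Write $T^* = \{B\in F : \delta_* \le d(B,\bar A) < \delta^*\}$ for the shell over which the supremum is taken. The first observation is that since $d(B,\bar A)\ge \delta_*$ on $T^*$, the penalty satisfies $-\tfrac12 d(B,\bar A)^2 \le -\tfrac12\delta_*^2$ uniformly, so
\[
\sup_{B\in T^*}\bigl(Z_B - \tfrac12 d(B,\bar A)^2\bigr) \le \sup_{B\in T^*} G_B + \sup_{B\in T^*} W_B - \tfrac12\delta_*^2,
\]
and it remains only to bound the two suprema on the right.

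Next I would verify the hypotheses of \Cref{lemma:gaussian-process} and \Cref{lemma:quadratic-process} for $T^*$. Being a subset of $F$, the shell $T^*$ inherits symmetry, the eigenvalue bound, pairwise commutation and the total order, so $T^*$ is again a family of ordered linear smoothers. On $T^*$ we have $d(B,\bar A) < \delta^*$, which simultaneously gives $\sup_{B\in T^*} d(\bar A,B)\le \delta^*$ (the hypothesis of \Cref{lemma:gaussian-process}) and, since $\sigma^2\|B-\bar A\|_F^2 \le d(B,\bar A)^2$ by the definition \eqref{def-metric} of $d$, the bound $\sigma\|B-\bar A\|_F \le \delta^*$ (the hypothesis of \Cref{lemma:quadratic-process}). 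Applying both lemmas with $x=u$ and taking a union bound, with probability at least $1-3e^{-u}$,
\[
\sup_{B\in T^*} G_B \le \sigma(\C + 3\sqrt{2u})\delta^*, \qquad \sup_{B\in T^*} W_B \le \C\sigma\delta^* + \C\sigma\sqrt u\,\delta^* + \C\sigma^2 u.
\]

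To obtain the first displayed inequality of the lemma I would collect terms. The summands proportional to $\sigma\delta^*$ that carry no factor of $u$ are absorbed into the $\sigma\sqrt u\,\delta^*$ terms using $u\ge 1$, hence $\sqrt u\ge 1$; the $\sigma^2 u$ term is kept as is. This yields the bound $\Cr{lem54-upper-1}[\sigma^2 u + \delta^*\sigma\sqrt u] - \tfrac12\delta_*^2$ for an absolute constant $\Cr{lem54-upper-1}$. The second inequality is then a calibrated application of Young's inequality $ab \le \tfrac{t}{2}a^2 + \tfrac{1}{2t}b^2$ to the cross term $\delta^*\cdot(\Cr{lem54-upper-1}\sigma\sqrt u)$ with $t=\tfrac18$, which splits it into $\tfrac1{16}(\delta^*)^2 + 4\Cr{lem54-upper-1}^2\sigma^2 u$ and gives the stated bound with $\Cr{lem54-upper-2} = \Cr{lem54-upper-1} + 4\Cr{lem54-upper-1}^2$.

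The heavy lifting is done by the two process lemmas and by \Cref{lemma:bound-on-gamma-functionals} feeding into them, so there is no serious analytic obstacle here: the argument is essentially bookkeeping plus a union bound. The one point that genuinely requires care is the numerology in the second inequality. The coefficient $\tfrac1{16}$ on $(\delta^*)^2$ is not cosmetic; it must be strictly smaller than the $\tfrac12$ appearing in the penalty $-\tfrac12\delta_*^2$ so that, in the dyadic peeling over the scales $d(B,\bar A)$ that this lemma is designed to feed, the gain $-\tfrac12\delta_*^2$ of one shell dominates the residual $+\tfrac1{16}(\delta^*)^2$ left over from the next finer shell. This is precisely why I would fix the Young's-inequality parameter $t$ to land on $\tfrac1{16}$ rather than invoke an unspecified AM--GM constant.
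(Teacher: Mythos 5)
Your proposal is correct and follows essentially the same route as the paper's proof: the decomposition $Z_B=G_B+W_B$, an application of \Cref{lemma:gaussian-process,lemma:quadratic-process} with a union bound giving probability $1-3e^{-u}$, absorption of the constant $\sigma\delta^*$ term using $u\ge 1$, and the same Young's-inequality split yielding $\Cr{lem54-upper-2}=\Cr{lem54-upper-1}+4\Cr{lem54-upper-1}^2$. The only cosmetic difference is that you apply the two process lemmas to the shell $\{B\in F:\ \delta_*\le d(B,\bar A)<\delta^*\}$ whereas the paper applies them to the full ball $\{B\in F:\ d(B,\bar A)\le\delta^*\}$; both sets inherit the ordered-smoother structure from $F$, so either choice works.
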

\begin{proof}
    First note that $-d(B,\bar A)^2\le-\delta_*^2$ for any $B$ as in the supremum.

    Now $Z_B=G_B+W_B$ where $G_B$ and $W_B$ are the processes studied
    in \Cref{lemma:gaussian-process,lemma:quadratic-process}.
    These lemmas applied to $T^*= \{ B\in F: d(B,\bar A)\le \delta^* \}$ yields
    that on an event of probability at least $1-3e^{-u}$ we have
    $$
    \textstyle
    \sup_{B\in T^*}Z_B \le \sup_{B\in T^*}(G_B + W_B)
    \le \C(\sigma \delta^* (1+\sqrt u)  + \sigma^2 u).
    $$
Since $u\ge 1$, we have established the first inequality by adjusting the absolute constant.
For the second inequality, we use that
$\Cr{lem54-upper-1}\delta_*\sigma\sqrt u \le 4\Cr{lem54-upper-1}^2\sigma^2 u + \tfrac{1}{16} (\delta^*)^2
$
and set $\Cr{lem54-upper-2}=\Cr{lem54-upper-1} + 4\Cr{lem54-upper-1}^2$.
\end{proof}

\begin{lemma}[Slicing]
    \label{lemma:final-slicing}
    Suppose $F$ is a family of $n\times n$ ordered linear smoothers (cf.
    \Cref{def:ordered-linear-smoothers}), and $\bar A$ is a fixed matrix with $\|\bar A\|_{op}\leq 1$ which may not belong to $F$.
    Let $d$ be the metric \eqref{def-metric}. 
    Then for any $x\ge 1$, we have with probability at least $1- \Cr{series-probability} e^{-x}$
    $$
    \textstyle
    \sup_{B\in F}
    \left(Z_B - \tfrac 1 2 d(B,\bar A)^2 \right)
    \le \Cr{lem56} \sigma^2 x.
    $$
\end{lemma}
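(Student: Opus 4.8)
The plan is to run a peeling (slicing) argument in the metric $d$: cover $F$ by a geometric family of annuli centered at $\bar A$, apply \Cref{lemma:one-slice} on each annulus with a confidence parameter that grows with the annulus index, and close with a union bound. All of the probabilistic work is already contained in \Cref{lemma:one-slice}; what remains is to choose the radii and confidence levels so that the pieces assemble into a single $\sigma^2 x$ bound.

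Concretely, I would fix the base radius $\delta_0 = \sigma\sqrt{x}$ and set $\delta_k = 2^k\delta_0$ for $k\ge 0$. Since $\|\bar A\|_{op}\le 1$ and every $B\in F$ has operator norm at most $1$, the distance $d(B,\bar A)$ is finite for all $B$, so $F$ is covered by the innermost ball $\{B: d(B,\bar A)<\delta_0\}$ together with the annuli $S_k=\{B\in F:\ \delta_{k-1}\le d(B,\bar A)<\delta_k\}$ for $k\ge 1$. Applying \Cref{lemma:one-slice} to the innermost ball with $(\delta_*,\delta^*)=(0,\delta_0)$ and $u=x$ yields the bound $(\Cr{lem54-upper-2}+\tfrac1{16})\sigma^2 x$ on an event of probability at least $1-3e^{-x}$; applying it to each $S_k$ with $(\delta_*,\delta^*)=(\delta_{k-1},\delta_k)$ and $u=u_k:=x+k$ yields
$$\sup_{B\in S_k}\Big(Z_B-\tfrac12 d(B,\bar A)^2\Big)\le \Cr{lem54-upper-2}\sigma^2 u_k+\tfrac1{16}\delta_k^2-\tfrac12\delta_{k-1}^2$$
on an event of probability at least $1-3e^{-u_k}$.

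The crux is the sign of the purely metric contribution. With factor-$2$ spacing, $\tfrac1{16}\delta_k^2-\tfrac12\delta_{k-1}^2=4^k\delta_0^2(\tfrac1{16}-\tfrac18)=-\tfrac1{16}4^k\sigma^2 x$, so the penalty accrued inside annulus $k$ is strictly dominated by the variance reduction inherited from the previous radius. Thus the per-annulus bound is $\sigma^2\big[\Cr{lem54-upper-2}(x+k)-\tfrac1{16}4^k x\big]$, and since $x\ge 1$ I may replace $-\tfrac1{16}4^k x$ by $-\tfrac1{16}4^k$, after which $\Cr{lem54-upper-2}k-\tfrac1{16}4^k$ is bounded above by an absolute constant uniformly in $k$ (the exponential eventually beats the linear term); hence every piece contributes at most $\C\sigma^2 x$. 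Summing the failure probabilities gives $3e^{-x}+\sum_{k\ge1}3e^{-(x+k)}=3e^{-x}/(1-e^{-1})=\Cl{series-probability}e^{-x}$, so on the intersection of all these events — of probability at least $1-\Cr{series-probability}e^{-x}$ — each $B\in F$ lies in exactly one piece and therefore satisfies $Z_B-\tfrac12 d(B,\bar A)^2\le \Cl{lem56}\sigma^2 x$, which is the claim.

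I expect the only delicate point to be the bookkeeping above: the geometric ratio must be small enough (here $2<2\sqrt2$, the threshold below which $\tfrac1{16}\rho^2<\tfrac12$) that the negative term $-\tfrac12\delta_{k-1}^2$ from the previous slice overwhelms both the within-slice term $\tfrac1{16}\delta_k^2$ and the linearly growing confidence penalty $\Cr{lem54-upper-2}\sigma^2 u_k$, while simultaneously the increments $u_k=x+k$ must be large enough that $\sum_k e^{-u_k}$ sums to an absolute multiple of $e^{-x}$. Once those two competing requirements are balanced, the present lemma is a deterministic optimization over the slice index on the good event.
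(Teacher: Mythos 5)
Your proof is correct and takes essentially the same approach as the paper's: both peel $F$ into geometric slices centered at $\bar A$, apply \Cref{lemma:one-slice} on each slice, and finish with a union bound over the slices. The only difference is bookkeeping — the paper takes radii $\delta_k=2^k\sigma$ (with $\delta_0=0$) and lets the per-slice confidence parameters $u_k$ grow geometrically in $k$ so as to cancel the $\tfrac{1}{16}\delta_k^2$ term, whereas you scale the radii by $\sqrt{x}$ and take $u_k=x+k$ linear, letting the negative metric term $-\tfrac{1}{16}4^k\sigma^2 x$ absorb the linear growth; both choices balance the same competition and yield the stated bound.
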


\begin{proof}
    We use here a method known as \emph{slicing}, we refer the reader to
    Section 5.4 in~\cite{van2014probability} for an introduction. Write $F$ as the union
\[
    F=\cup_{k=1}^\infty T_k 
    \quad \text{ where } T_k\text{ is the slice }\quad
    T_k=\{B\in F: \delta_{k-1}\leq \tilde{d}(B,\bar A)\leq \delta_k\},
\]
with $\delta_0=0$ and $\delta_k=2^k\sigma$ for $k\geq 1$.
By definition of the geometric sequence $(\delta_k)_{k\ge0}$, inequality
$\frac{1}{16}\delta_k^2-\frac 1 2 \delta_{k-1}^2 \le \frac 1 2 \sigma^2
- \frac{1}{16}\delta_k^2
\le \frac 1 2 \sigma^2 x
- \frac{1}{16}\delta_k^2
$ holds for all $k\ge 1$.
With $\delta_*=\delta_{k-1},\delta^*=\delta_k$, \Cref{lemma:one-slice} yields
that for all $k\ge1$,
$$
\mathbb P\Big(
    \sup_{B\in T_k}(Z_B - \tfrac 1 2 d(B,\bar A)^2) \le \Cr{lem54-upper-2}\sigma^2 u_k - \tfrac{1}{16} \delta_k^2 + \tfrac{\sigma^2 x}{2}
\Big) \ge 1 - 3e^{-u_k}
$$
for any $u_k\ge 1$.
The above holds simultaneously over all slices $(T_k)_{k\ge 1}$
with probability at least $1-3\sum_{k=1}^\infty e^{-u_k}$ by the union bound.
It remains to specify a sequence $(u_k)_{k\ge 1}$ of reals greater than 1.
We choose $u_k = x+\delta_k^2/(\sigma^2 16\Cr{lem54-upper-2})$ which is greater than 1 since $x\ge 1$.
Then by construction,
$\Cr{lem54-upper-2}\sigma^2 u_k - \tfrac{1}{16} \delta_k^2 + \tfrac{\sigma^2 x}{2}
= (\Cr{lem54-upper-2}+1/2)\sigma^2 x$ and we set
$\Cr{lem56}=\Cr{lem54-upper-2}+1/2$.
Furthermore,
$\sum_{k=1}^\infty e^{-u_k} = e^{-x} \sum_{k=1}^\infty e^{-2^{2k}/(16\Cr{lem54-upper-2})}.$ 
The sum $3\sum_{k=1}^\infty e^{-2^{2k}/(16\Cr{lem54-upper-2})}$ is equal to a finite absolute constant
named $\Cr{series-probability}$ in the statement of the Lemma.
\end{proof}

\begin{proof}[Proof of \Cref{thm:main}]
    Let $F=\{A_1,...,A_M\}$ and $\bar A = A_{j_*}$ where $j_*$ is defined in
    the statement of \Cref{thm:main}. The conclusion of \Cref{lemma:deterministic}
    can be rewritten as
    $$\|A_{\hat\theta} y - \mu\|^2 - \|\bar A y - \mu\|^2 \le \sup_{B\in F}(Z_B - \tfrac 1 2 d(B,\bar A)^2)
    $$
    where $F=\{A_1,...,A_M\}$ is a family of ordered linear smoothers.
    \Cref{lemma:final-slicing} completes the proof of
    \eqref{eq:main-thm:probability}.
    Then \eqref{regret-bound-main-theorem-expectation} is obtained by integration
    of \eqref{eq:main-thm:probability} using $\E[Z]\le \int_0^\infty \mathbb P(Z>t) dt$
    for any $Z\ge 0$.
\end{proof}

\begin{proof}[Proof of \Cref{thm:multi.family}]
As in the proof of \Cref{thm:main}, we use \Cref{lemma:deterministic} to deduce that
a.s.,
$$
\|A_{\hat\theta} y - \mu\|^2 - \|\bar A y - \mu\|^2 \leq \max_{j=1,...,M}(Z_{A_j}-\tfrac 1 2 d(A_j,\bar A)^2)= \max_{k=1,...,q}\max_{B\in F_k} (Z_B -\tfrac 1 2 d(B,\bar A)^2).
$$
Since each $F_k$ is a family of ordered linear smoothers, by \Cref{lemma:final-slicing} we have
$$
\textstyle
\mathbb{P}\big(\max_{B\in F_k} (Z_B -\tfrac 1 2 d(B,\bar A)^2)>\Cr{lem56}\sigma^2 x\big)\leq \Cr{series-probability}e^{-x}
\qquad
\text{ for each } k=1,\ldots, q.
$$
The union bound yields \eqref{multiple-famiilies-probability} and we use $\E[Z]\le \int_0^\infty \mathbb P(Z>t) dt$ for $Z\ge0$ to deduce \eqref{multiple-families-mean}.
\end{proof}

\bibliographystyle{plainnat}
\bibliography{nipsBY}

\begin{thebibliography}{38}
\providecommand{\natexlab}[1]{#1}
\providecommand{\url}[1]{\texttt{#1}}
\expandafter\ifx\csname urlstyle\endcsname\relax
  \providecommand{\doi}[1]{doi: #1}\else
  \providecommand{\doi}{doi: \begingroup \urlstyle{rm}\Url}\fi

\bibitem[Adamczak(2015)]{adamczak2015note}
Radoslaw Adamczak.
\newblock A note on the hanson-wright inequality for random vectors with
  dependencies.
\newblock \emph{Electronic Communications in Probability}, 20, 2015.

\bibitem[Arlot and Bach(2009)]{arlot2009data}
Sylvain Arlot and Francis~R Bach.
\newblock Data-driven calibration of linear estimators with minimal penalties.
\newblock In \emph{Advances in Neural Information Processing Systems}, pages
  46--54, 2009.

\bibitem[Arlot and Celisse(2010)]{arlot2010survey}
Sylvain Arlot and Alain Celisse.
\newblock A survey of cross-validation procedures for model selection.
\newblock \emph{Statistics surveys}, 4:\penalty0 40--79, 2010.

\bibitem[Bellec(2014)]{bellec2014bernstein}
Pierre~C. Bellec.
\newblock Concentration of quadratic forms under a bernstein moment assumption.
\newblock \emph{Technical report. Arxiv:1901.08726}, 2014.
\newblock URL \url{https://arxiv.org/pdf/1901.08736}.

\bibitem[Bellec(2018)]{bellec2014affine}
Pierre~C. Bellec.
\newblock Optimal bounds for aggregation of affine estimators.
\newblock \emph{Ann. Statist.}, 46\penalty0 (1):\penalty0 30--59, 02 2018.
\newblock \doi{10.1214/17-AOS1540}.
\newblock URL \url{https://arxiv.org/pdf/1410.0346.pdf}.

\bibitem[Belloni et~al.(2014)Belloni, Chernozhukov, and
  Wang]{belloni2014pivotal}
Alexandre Belloni, Victor Chernozhukov, and Lie Wang.
\newblock Pivotal estimation via square-root lasso in nonparametric regression.
\newblock \emph{Ann. Statist.}, 42\penalty0 (2):\penalty0 757--788, 04 2014.
\newblock URL \url{http://dx.doi.org/10.1214/14-AOS1204}.

\bibitem[Boucheron et~al.(2013)Boucheron, Lugosi, and
  Massart]{boucheron2013concentration}
St{\'e}phane Boucheron, G{\'a}bor Lugosi, and Pascal Massart.
\newblock \emph{Concentration inequalities: A nonasymptotic theory of
  independence}.
\newblock Oxford University Press, 2013.

\bibitem[Boyd and Vandenberghe(2009)]{boyd2009convex}
Stephen Boyd and Lieven Vandenberghe.
\newblock \emph{Convex optimization}.
\newblock Cambridge university press, 2009.

\bibitem[Brown et~al.(2007)Brown, Levine, et~al.]{brown2007variance}
Lawrence~D Brown, Michael Levine, et~al.
\newblock Variance estimation in nonparametric regression via the difference
  sequence method.
\newblock \emph{The Annals of Statistics}, 35\penalty0 (5):\penalty0
  2219--2232, 2007.

\bibitem[Chernousova et~al.(2013)Chernousova, Golubev, Krymova,
  et~al.]{chernousova2013ordered}
Elena Chernousova, Yuri Golubev, Ekaterina Krymova, et~al.
\newblock Ordered smoothers with exponential weighting.
\newblock \emph{Electronic journal of statistics}, 7:\penalty0 2395--2419,
  2013.

\bibitem[Cohen(1966)]{cohen1966all}
Arthur Cohen.
\newblock All admissible linear estimates of the mean vector.
\newblock \emph{The Annals of Mathematical Statistics}, pages 458--463, 1966.

\bibitem[Craven and Wahba(1978)]{craven1978smoothing}
Peter Craven and Grace Wahba.
\newblock Smoothing noisy data with spline functions.
\newblock \emph{Numerische mathematik}, 31\penalty0 (4):\penalty0 377--403,
  1978.

\bibitem[Dai et~al.(2012)Dai, Rigollet, and Zhang]{dai2012deviation}
D.~Dai, P.~Rigollet, and T.~Zhang.
\newblock Deviation optimal learning using greedy {Q}-aggregation.
\newblock \emph{The Annals of Statistics}, 40\penalty0 (3):\penalty0
  1878--1905, 2012.

\bibitem[Dai et~al.(2014)Dai, Rigollet, L., and T.]{dai2014aggregation}
D.~Dai, P.~Rigollet, Xia L., and Zhang T.
\newblock Aggregation of affine estimators.
\newblock \emph{Electon. J. Stat.}, 8:\penalty0 302--327, 2014.

\bibitem[Dalalyan and Salmon(2012)]{dalalyan2012sharp}
Arnak~S. Dalalyan and Joseph Salmon.
\newblock Sharp oracle inequalities for aggregation of affine estimators.
\newblock \emph{The Annals of Statistics}, 40\penalty0 (4):\penalty0
  2327--2355, 2012.

\bibitem[Dette et~al.(1998)Dette, Munk, and Wagner]{dette1998estimating}
Holger Dette, Axel Munk, and Thorsten Wagner.
\newblock Estimating the variance in nonparametric regression—what is a
  reasonable choice?
\newblock \emph{Journal of the Royal Statistical Society: Series B (Statistical
  Methodology)}, 60\penalty0 (4):\penalty0 751--764, 1998.

\bibitem[Dirksen(2015)]{dirksen2015tail}
Sjoerd Dirksen.
\newblock Tail bounds via generic chaining.
\newblock \emph{Electronic Journal of Probability}, 20, 2015.

\bibitem[Golub et~al.(1979)Golub, Heath, and Wahba]{golub1979generalized}
Gene~H Golub, Michael Heath, and Grace Wahba.
\newblock Generalized cross-validation as a method for choosing a good ridge
  parameter.
\newblock \emph{Technometrics}, 21\penalty0 (2):\penalty0 215--223, 1979.

\bibitem[Golubev et~al.(2010)]{golubev2010universal}
Yuri Golubev et~al.
\newblock On universal oracle inequalities related to high-dimensional linear
  models.
\newblock \emph{The Annals of Statistics}, 38\penalty0 (5):\penalty0
  2751--2780, 2010.

\bibitem[Hall et~al.(1990)Hall, Kay, and Titterinton]{hall1990asymptotically}
Peter Hall, JW~Kay, and DM~Titterinton.
\newblock Asymptotically optimal difference-based estimation of variance in
  nonparametric regression.
\newblock \emph{Biometrika}, 77\penalty0 (3):\penalty0 521--528, 1990.

\bibitem[Hastie et~al.(2001)Hastie, Tibshirani, and
  Friedman]{friedman2001elements}
Trevor Hastie, Robert Tibshirani, and Jerome Friedman.
\newblock \emph{The Elements of Statistical Learning}.
\newblock Springer Series in Statistics. Springer, 2001.

\bibitem[Hsu et~al.(2012)Hsu, Kakade, and Zhang]{hsu2012tail}
Daniel Hsu, Sham Kakade, and Tong Zhang.
\newblock A tail inequality for quadratic forms of subgaussian random vectors.
\newblock \emph{Electron. Commun. Probab.}, 17:\penalty0 no. 52, 1--6, 2012.
\newblock \doi{10.1214/ECP.v17-2079}.
\newblock URL \url{http://ecp.ejpecp.org/article/view/2079}.

\bibitem[Kneip(1994)]{kneip1994ordered}
Alois Kneip.
\newblock Ordered linear smoothers.
\newblock \emph{The Annals of Statistics}, 22\penalty0 (2):\penalty0 835--866,
  1994.

\bibitem[Leung and Barron(2006)]{leung2006information}
Gilbert Leung and Andrew~R. Barron.
\newblock Information theory and mixing least-squares regressions.
\newblock \emph{Information Theory, IEEE Transactions on}, 52\penalty0
  (8):\penalty0 3396--3410, 2006.

\bibitem[Li(1986)]{li1986asymptotic}
Ker-Chau Li.
\newblock Asymptotic optimality of $ c\_l $ and generalized cross-validation in
  ridge regression with application to spline smoothing.
\newblock \emph{The Annals of Statistics}, 14\penalty0 (3):\penalty0
  1101--1112, 1986.

\bibitem[Mallows(1973)]{mallows1973some}
Colin~L Mallows.
\newblock Some comments on c p.
\newblock \emph{Technometrics}, 15\penalty0 (4):\penalty0 661--675, 1973.

\bibitem[Munk et~al.(2005)Munk, Bissantz, Wagner, and
  Freitag]{munk2005difference}
Axel Munk, Nicolai Bissantz, Thorsten Wagner, and Gudrun Freitag.
\newblock On difference-based variance estimation in nonparametric regression
  when the covariate is high dimensional.
\newblock \emph{Journal of the Royal Statistical Society: Series B (Statistical
  Methodology)}, 67\penalty0 (1):\penalty0 19--41, 2005.

\bibitem[Nemirovski(2000)]{nemirovski2000lecture}
Arkadi Nemirovski.
\newblock Topics in non-parametric statistics.
\newblock In \emph{Lectures on probability theory and statistics (Saint-Flour,
  1998)}, volume 1738 of \emph{Lecture Notes in Mathematics}. Springer, Berlin,
  2000.

\bibitem[Owen(2007)]{owen2007robust}
Art~B Owen.
\newblock A robust hybrid of lasso and ridge regression.
\newblock Technical report, Stanford University, 2007.

\bibitem[Rigollet and Tsybakov(2007)]{rigollet2007linear}
Ph. Rigollet and A.~B. Tsybakov.
\newblock Linear and convex aggregation of density estimators.
\newblock \emph{Math. Methods Statist.}, 16\penalty0 (3):\penalty0 260--280,
  2007.
\newblock \doi{10.3103/S1066530707030052}.
\newblock URL \url{http://dx.doi.org/10.3103/S1066530707030052}.

\bibitem[Rigollet(2012)]{rigollet2012kullback}
Philippe Rigollet.
\newblock Kullback--{{L}}eibler aggregation and misspecified generalized linear
  models.
\newblock \emph{Ann. Statist.}, 40\penalty0 (2):\penalty0 639--665, 2012.
\newblock \doi{10.1214/11-AOS961}.

\bibitem[Rudelson and Vershynin(2013)]{rudelson2013hanson}
Mark Rudelson and Roman Vershynin.
\newblock Hanson-wright inequality and sub-gaussian concentration.
\newblock \emph{Electron. Commun. Probab.}, 18:\penalty0 no. 82, 1--9, 2013.
\newblock URL \url{http://ecp.ejpecp.org/article/view/2865}.

\bibitem[Sun and Zhang(2012)]{sun2012scaled}
Tingni Sun and Cun-Hui Zhang.
\newblock Scaled sparse linear regression.
\newblock \emph{Biometrika}, 2012.

\bibitem[Tsybakov(2014)]{tsybakovICM}
A.B. Tsybakov.
\newblock Aggregation and minimax optimality in high dimensional estimation.
\newblock \emph{Proceedings of International Congress of Mathematicians (Seoul,
  2014)}, 3:\penalty0 225--246, 2014.

\bibitem[Tsybakov(2003)]{tsybakov2003optimal}
Alexandre~B. Tsybakov.
\newblock Optimal rates of aggregation.
\newblock In \emph{Learning Theory and Kernel Machines}, pages 303--313.
  Springer, 2003.

\bibitem[van Handel(2014)]{van2014probability}
Ramon van Handel.
\newblock Probability in high dimension.
\newblock Technical report, PRINCETON UNIV NJ, 2014.

\bibitem[Vavasis(2001)]{Vavasis2001}
Stephen~A. Vavasis.
\newblock \emph{Complexity Theory: Quadratic Programming}, pages 304--307.
\newblock Springer US, Boston, MA, 2001.
\newblock ISBN 978-0-306-48332-5.
\newblock \doi{10.1007/0-306-48332-7_65}.
\newblock URL \url{https://doi.org/10.1007/0-306-48332-7_65}.

\bibitem[Vershynin(2018)]{vershynin2018high}
Roman Vershynin.
\newblock \emph{High-dimensional probability: An introduction with applications
  in data science}, volume~47.
\newblock Cambridge University Press, 2018.

\end{thebibliography}

\end{document}